\newcommand{\rrvert}{\vert}
\newcommand{\llvert}{\vert}
\newcommand{\underset}[2]{\mathop{#2}\limits_{#1}}
\newcommand{\one}{\mathbh{1}}
\newcommand{\un}{\mathbh{1}}
\newcommand{\R}{\mathbb R}
\renewcommand{\P}{\mathbb P}
\newcommand{\Z}{\mathbb Z}
\newcommand{\N}{\mathbb N}
\newcommand{\T}{\mathbb T} 
\newcommand{\X}{\mathcal X}
\newcommand{\Y}{\mathcal X} 
\newcommand{\setZ}{\mathcal Z}
\newcommand{\V}{\mathcal V}
\newcommand{\W}{\mathcal W}
\newcommand{\outdeg}{Y}
\newtheorem{theorem}{Theorem}
\newtheorem{cor}[theorem]{Corollary}
\newtheorem{lemma}[theorem]{Lemma}
\newtheorem{prop}[theorem]{Proposition}
\begin{document}
\begin{frontmatter}

\title{Spatial preferential attachment networks:\\ Power laws and clustering coefficients\thanksref{T1}}
\runtitle{Spatial preferential attachment networks}

\begin{aug}
\author[A]{\fnms{Emmanuel} \snm{Jacob}\ead[label=e1]{emmanuel.jacob@ens-lyon.fr}}
\and
\author[B]{\fnms{Peter} \snm{M\"orters}\corref{}\ead[label=e2]{p.morters@bath.ac.uk}}
\runauthor{E. Jacob and P. M\"orters}
\affiliation{ENS Lyon and University of Bath}
\address[A]{ENS Lyon\\
46 all\'ee d'Italie\\
69007 Lyon\\
France\\
\printead{e1}}
\address[B]{Department of Mathematical Sciences\\
University of Bath\\
Bath BA2 7AY\\
United Kingdom\\
\printead{e2}}
\end{aug}
\thankstext{T1}{Supported by the \emph{European Science Foundation} through
the research network \emph{Random Geometry of Large Interacting Systems
and Statistical Physics} (\emph{RGLIS}).}

\received{\smonth{10} \syear{2012}}
\revised{\smonth{5} \syear{2013}}

%
\begin{abstract}
We define a class of growing networks in which new nodes are given a
spatial position and
are connected to existing nodes with a probability mechanism favoring
short distances
and high degrees. The competition of preferential attachment and
spatial clustering
gives this model a range of interesting properties. Empirical degree
distributions converge
to a limit law, which can be a power law with any exponent~$\tau>2$.
The average clustering
coefficient of the networks converges to a positive limit. Finally, a
phase transition occurs in
the global clustering coefficients and empirical distribution of edge lengths
when the power-law exponent crosses the critical value~$\tau=3$. Our
main tool in the proof of
these results is a general weak law of large numbers in the spirit of
Penrose and Yukich.
\end{abstract}

%
\begin{keyword}[class=AMS]
\kwd[Primary ]{05C80}
\kwd[; secondary ]{60C05}
\kwd{90B15}
\end{keyword}
\begin{keyword}
\kwd{Scale-free network}
\kwd{Barab\'asi--Albert model}
\kwd{preferential attachment}
\kwd{dynamical random graph}
\kwd{geometric random graph}
\kwd{power law}
\kwd{degree distribution}
\kwd{edge length distribution}
\kwd{clustering coefficient}
\end{keyword}

\pdfkeywords{05C80, 60C05, 90B15, Scale-free network, Barabasi-Albert model, preferential attachment, dynamical random graph,
geometric random graph, power law, degree distribution, edge length distribution, clustering coefficient}
\end{frontmatter}

\setcounter{footnote}{1}

\section{Introduction}\label{sec1}

Many of the phenomena in the complex world in which we live have a
rough description as a large network of interacting
components. It is therefore a fundamental problem to derive the global
structure of such networks
from basic local principles. A well-established principle is the \emph{preferential attachment paradigm} which suggests
that networks are built by adding nodes and links successively, in such
a way that new nodes prefer to be connected to
existing nodes if they have a high degree~\cite{BarabasiAlbert}. The
preferential attachment paradigm offers, for example, a credible explanation
of the observation that many real networks have degree distributions
following a \emph{power law behavior}. On the global
scale preferential attachment networks are \emph{robust} under random
attack if the power law exponent
is sufficiently small, and have logarithmic or doubly logarithmic
diameters depending on the power law exponent. These
features, together with a reasonable degree of mathematical
tractability, have all contributed to the enormous popularity of these
models.

Among the many criticisms directed at preferential attachment models is
a significant deviation of their local structure from that observed in
real networks. In preferential attachment models, the neighborhoods of
typical nodes have a tree-like topology~\cite{DMgiant,weak},
which is a crucial feature for their global analysis, but is not in
line with the behavior of many real world networks.
The most popular quantities used to measure the local clustering of
networks are the \emph{clustering coefficients}, which are measured to be
positive in most real networks, but which invariably vanish in
preferential attachment models that do not incorporate further
effects~\mbox{\cite{ABsurvey,BRsurvey}}.
A~possible reason for the clustering of real networks is the presence
of a hidden variable assigned to the nodes, such that similarity
of values is a further incentive to form links.
Several authors have therefore proposed models combining preferential
attachment with spatial features in order to address the weaknesses of pure
preferential attachment. Among the mathematically sound attempts in
this direction are the papers of Flaxman, Frieze and Vera \cite{Flaxman,Flaxman2},
Jordan~\cite{Jordan}, Jordan and Wade~\cite{Jordan-Wade}, Aiello et
al.~\cite{Aiello} and Cooper, Frieze and Pra\l{}at \cite{Cooper}.
These papers show that combining preferential attachment and spatial
dependence can retain the global power law behavior while changing the local
topology of the network, for example, by showing that the resulting
graphs have small separators~\cite{Flaxman,Flaxman2},
but none of them discusses clustering systematically by analyzing the
clustering coefficients.

In this paper we propose a natural model of a network in which the
preferential attachment paradigm is modulated by spatial proximity. Our
model is
a generalization and variant of the one introduced in Aiello et
al.~\cite{Aiello}.
The model is best described as a growing network in continuous
time. New nodes are born according to a Poisson process of rate one and
placed uniformly on the one-dimensional torus of length one. 
A node born at time $t$ is connected by an ordered edge to each
existing node independently with a probability $\varphi(t\rho/f(d))$
where $d$ is the indegree of the older node
at time $t$, and $\rho$ is the distance of the nodes. The decreasing
\emph{profile function} $\varphi\dvtx [0,\infty)\to[0,1]$ and
increasing \emph{attachment rule}
$f\dvtx \N\cup\{0\} \to(0,\infty)$ are the parameters of the model.
Loosely speaking, the fact that the time~$t$ and the spatial
distance~$\rho$ appear as a product in the connection probability
ensures that
the probability that new nodes connect to their spatially nearest
neighbors, which typically are distance $1/t$ away and have bounded indegree,
does not go to zero or one. This is necessary to balance the spatial
and preferential attachment effects in our model. We show that this
modification of the original idea of preferential attachment preserves
the power
law behavior of existing preferential attachment models while
significantly changing the local topology leading to a positive average
clustering coefficient.
We also observe interesting phase transitions in the behavior of the
global clustering coefficient and the empirical edge length distribution.

Our analysis of this model is using methods developed originally for
the study of random geometric graphs; see Penrose and Yukich~\cite
{YukichPenrose} for a seminal paper
in this area and~\cite{Penrose} for an exhibition. This approach is new
in the context of preferential attachment and quite different from the
established route to study dynamical random graph models, which is
based on the use of differential equations to study the evolution of
expected quantities and concentration inequalities to relate them to
the empirical quantities. By contrast, our analysis is based on a
rescaling which transforms the growth in time into a growth in space.
This transformation stabilizes the neighborhoods of a typical vertex
and allows us to observe convergence of the local neighborhoods of
typical vertices in the graph to an infinite graph. This infinite
graph, which is not a tree, is locally finite and can be described by
means of a Poisson point process. We establish a weak law of large
numbers, similar to the one given in~\cite{YukichPenrose}, which allows
us to deduce convergence results for a large class of functionals of
the graph. Some further work is required to show that certain rare
effects, like vertices having a very high degree or being linked to
distant vertices, do not affect our functionals.%

The paper is organized as follows. In Section~\ref{sec2} we present the model.
The main results concerning the degree distribution, the clustering
coefficients and the edge length distribution,
are stated in Section~\ref{sec3}. In Section~\ref{sec4} we describe the general method
and main tools developed for the study of the network.
Section~\ref{sec5} completes the proofs of our main results and, finally,
Section~\ref{sec6} briefly discusses some variants and further developments.

\section{The model}\label{sec2}

Write $\T_1$ for the one-dimensional torus of length 1 represented as
$\R/\Z$ endowed with the usual distance. Let $\X$ denote a Poisson
point process of unit intensity on $\T_1\times(0,\infty)$. A point
$\mathbf{x}
=(x,s)$ in $\X$ is a vertex $\mathbf{x}$, born at time~$s$ and placed at
position $x$.
Observe that, almost surely, two points of $\X$ neither have the same
birth time nor the same position.
We say that $(x,s)$ is \emph{older} than $(y,t)$ if $s<t$. An edge
is always oriented from the younger to the older vertex. For $t> 0$,
write $\X_t$ for $\X\cap(\T_1\times[0,t])$, the set of vertices
already born at time $t$. We construct a growing
sequence of graphs $(G_t)_{t>0}$, starting from the empty graph, and
adding successively the vertices in $\X$ when they are born (so that
the vertex set of $G_t$ is $\X_t$), and connecting them to some of the
older vertices. The rule is as follows:

\subsection*{Construction rule}
Given the graph $G_{t-}$ and $\mathbf
{y}=(y,t)\in\X
$, we add the vertex~$\mathbf{y}$ and, independently for each vertex
$\mathbf{x}$ in $G_{t-}$,
we insert the edge $(\mathbf{y}, \mathbf{x})$, independently of $\X
$, with probability
%
%
\begin{equation}
\label{constructionrule} \varphi\biggl(\frac{t d(\mathbf{x},\mathbf
{y})} {f(Z_\mathbf
{x}(t-))} \biggr).
\end{equation}
The resulting graph is denoted by $G_t$.

Here the following definitions and conventions apply:
\begin{longlist}[(3)]
\item[(1)] $d(\mathbf{x}, \mathbf{y})$ denotes the length of the edge
$(\mathbf{y}, \mathbf{x})$, which is
the usual distance in~$\T_1$ (for which, by a minor abuse of notation
we also use the notation~$d$) between the spatial positions of the
vertices $\mathbf{x}$ and~$\mathbf{y}$.
\item[(2)] $\varphi\dvtx [0,\infty)\to[0,1]$ is the \emph{profile function}.
It is supposed to be nonincreasing and of total integral $1/2$.
Informally, it describes the spatial dependence of the probability that
the newborn vertex $\mathbf{y}$ is linked to the existing vertex
$\mathbf{x}$.
\item[(3)] $Z_\mathbf{x}(t-)$ [resp., $Z_\mathbf{x}(t)$] denotes the
indegree of vertex $\mathbf{x}$
at time $t-$ (resp.,~$t$), that is, the total number of incoming edges
for the vertex $\mathbf{x}$ in $G_{t-}$ (resp., $G_t$). Similarly, we
denote by
$\outdeg_\mathbf{y}$ the outdegree of vertex $\mathbf{y}$, which
remains the same at
all times $u\ge t$.

\item[(4)] $f\dvtx \N\cup\{0\} \to(0,\infty)$ is the \emph{attachment
rule}. It is supposed to be nondecreasing. Informally, $f(k)$
quantifies the preferential ``strength'' of a vertex of current
indegree~$k$, or likelihood of attracting new links. We assume that the
attachment rule $f$ has an asymptotic slope
\[
\gamma:= \lim_{k\to\infty} \frac{f(k)}{k} \in(0,1).
\]
\end{longlist}

Note that, for any $r>0$, the profile function~$\varphi$ and attachment
rule~$f$ together define the same model as the
profile function~$x\mapsto\varphi(rx)$ and the attachment rule~$k
\mapsto rf(k)$.
The normalization convention $\int\varphi=\frac{1}2$, which will always
be assumed for convenience, represents therefore no loss of~generality.\looseness=-1

Whereas in classical preferential attachment the linking probability
itself is multiplied by the preferential attachment factor $f(Z_\mathbf
{x}(t-))$,
in our spatial setup this factor enters as the spatial expansion of the
\emph{influence profile} around the vertex~$\mathbf{x}=(x,s)$ at time~$t$,
which is
described by the function
\[
y \mapsto x+\varphi\biggl(\frac{t d(x,y)} {f(Z_\mathbf{x}(t-))} \biggr).
\]
The probability of connecting a new vertex~$(y,t)$ to an old one is
given by the value of the influence
profile around the old vertex at the position~$y$ of the new one. In
the important special case of the profile function
$\varphi(r)= \un{\{r<\frac{1}2\}}$, which only takes the values zero or
one, this decision is not random.
In this case a
vertex~$\mathbf{x}$ is linked to a new vertex born at time $t$ if and
only if
their positions are within distance ${f(Z_\mathbf{x}(t-))}/(2t)$. In other
words, every
vertex~$\mathbf{x}$ is surrounded by an \emph{influence region}, a
ball of
time-dependent radius ${f(Z_\mathbf{x}(t-))}/(2t)$, and a new vertex
is linked
to all older vertices in
whose influence regions it falls at the time of its birth. This special
case already reveals the complexity and interest of the model, and the
reader is encouraged
to first figure out its behavior.

The model introduced by Aiello et al.~\cite{Aiello} and further studied
by Cooper, Frieze and Pra\l{}at \cite{Cooper} and by Janssen, Pra\l{}at and
Wilson~\cite{JPW} is essentially the\vadjust{\goodbreak} same model for the special case
that the attachment rule is of the form $f(k)=A_1k+A_2$ and the profile
function is of the form $\varphi(x)= p \un{\{x<1/(2p)\}}$.
Small differences are that they work in discrete rather than continuous
time, and allow for spaces more general than~$\T_1$, but these
differences are inessential for the purposes of this paper; see also
our comments in Section~\ref{sec6}.

Recall the definition of the asymptotic slope~$\gamma$ of the
attachment function from~(4). As $\gamma>0$ this means that $f$ is
asymptotically linear, and this is known, in nonspatial preferential
attachment models, to lead to scale-free networks with power law
exponent $\tau=1+\frac{1} \gamma$.%

We now illustrate the connection between nonspatial preferential and
spatial attachment models.
Suppose the graph $G_{t-}$ is given, and a vertex is born at time $t$,
but we do not know its position, which is therefore uniform on $\T_1$.
Then, for each vertex $\mathbf{x}=(x,s) \in G_{t-}$, the probability
that it is
linked to the newborn vertex is equal to
\[
\int_{\T_1} \varphi\biggl(\frac{t d(x,y)} {f(Z_\mathbf
{x}(t-))} \biggr)\,\mathrm{d}y = \frac{f(Z_\mathbf{x}(t-))} t 2 \int_{0}^{{t}/
({f(Z_\mathbf
{x}(t-))})}
\varphi(y)\,\mathrm{d}y.
\]
As a consequence, the process $(Z_\mathbf{x}(t))_{t\ge s}$ is a
time-inhomogeneous pure birth process, starting from 0 and jumping at
time $t$ from state $k$ to state $k+1$ with intensity
\[
\frac{f(k)} t 2 \int_{0}^{{t}/({f(k)})} \varphi(x)\,\mathrm{d}x.
\]
This quantity is bounded by $f(k)/t$. As the pure birth process
$(Z_\mathbf{x}
(t))_{t\ge s}$ grows roughly like $t^{\gamma}$ (see Lemma~\ref
{polynomialgrowth} for a precise statement),
the normalization of $\varphi$ makes this bound asymptotically sharp.
Hence the jumping intensity of our process is the same as in the
classical Barab\'asi--Albert model of preferential attachment \cite{BarabasiAlbert,Toth}, or its variant studied by Dereich and M\"
orters~\cite{DMdegrees,DMconcave,DMgiant}. Not surprisingly, our
spatial model exhibits the same limiting indegree distribution.

However, as soon as one deepens the study of the graph further than the
first moment calculations, the essential difference with the
nonspatial models appears. The presence of edges is now strongly
correlated through the spatial positions of the vertices. These strong
correlations both make the model much harder to study and allow the
network to enjoy interesting clustering properties. These are the main
concerns of this paper and will be described in the next section. We
will henceforth use the common notation $g=o(h)$ to indicate that $g/h$
converges to zero, $g \asymp h$ if $g/h$ is bounded from zero and infinity
and $g\sim h$ to indicate that $g/h$ converges to one.

\section{Main results}\label{sec3}

\subsection{Indegree distribution}\label{sec3.1}
While the indegree of a given vertex grows indefinitely with the size
of the network, the \emph{mean indegree} in the graph $G_t$\vadjust{\goodbreak} converges
to a limiting distribution with polynomial decay. More precisely, for
$t>0$ such that $\X_t$ is nonempty, denote by $\mu_t$ the law of the
indegree of a randomly (and uniformly) chosen vertex in the graph
$G_t$, or \emph{empirical indegree distribution}.
More formally, the empirical indegree distribution is the random
measure on $\N\cup\{0\}$, which gives to each $k \in\N\cup\{0\}$
the weight
\[
\mu_t(k) = \frac{1} {|\X_t|} \sum_{\mathbf{x}\in\X_t}
\one{\bigl\{ Z_\mathbf{x}(t)=k\bigr\}},
\]
if $\X_t\neq\varnothing$ and $\mu_t(k)=\one\{k=0\}$ otherwise.
We introduce the probability measure~$\mu$, determined by its weights
%
%
\begin{equation}
\mu(k)= \frac{1} {1+f(k)} \prod_{l=0}^{k-1}
\frac{f(l)} {1+f(l)}.
\end{equation}
For any measure $\lambda$ on $\N\cup\{0\}$ and any function
$g\dvtx \N
\cup\{0\} \to[0,\infty)$, we write $\langle\lambda,g\rangle$ for the
expectation of $g$
under the law $\lambda$, or $\sum_{k\ge0} \lambda(k) g(k)$. The
following theorem states a convergence result for the empirical
indegree distribution $\mu_t$ to the probability measure $\mu$, which
we call limiting indegree distribution. This result implies, in
particular, convergence in probability, in the total variation norm.

%
%
\begin{theorem} \label{theoconvergenceindegree}
For any nondecreasing function $g\dvtx \N\cup\{0\} \to[0,\infty)$
satisfying $\langle\mu,g^p\rangle<\infty$ for some $p>1$, the following
limit holds:
\[
\langle\mu_t,g\rangle\longrightarrow\langle\mu,g\rangle,
\]
in probability, when $t\to\infty$.
\end{theorem}

%
%
\begin{rem}
The convergence in the theorem still holds for any function $g$, not
necessarily positive or monotonous, but with $g(k)=o(k^{ \delta})$ for
some $\delta<1/\gamma$.

It is easy to check that, the limiting distribution $\mu$ satisfies
\[
\mu(k) = k^{-(1+ (1/\gamma))+ o(1)}\qquad\mbox{as } k\uparrow
\infty,
\]
which highlights the scale-free property of the network with exponent
$\tau=1+1/\gamma$.
In the particular case of a linear attachment rule $f(k)=\gamma k +
\beta$, with $\gamma\in(0,1)$ and $\beta>0$, we have
\[
\mu(k) = \frac{1} \gamma\frac{\Gamma(k+(\beta/\gamma)) \Gamma((\beta+1)/ \gamma)} {
\Gamma(k+(({\beta+\gamma+1})/\gamma)) \Gamma(\beta/\gamma)} \sim
\frac{\Gamma(({\beta+1})/ \gamma)} {\gamma\Gamma(\beta/
\gamma)}
k^{-\tau} \qquad\mbox{as } k\uparrow\infty,
\]
a result that has already been obtained for their variant of the model
in Theorem~1.1 of Aiello et al.~\cite{Aiello} by a completely different
technique of proof.
\end{rem}

Our result shows that under our normalization convention, the profile
function has no influence on the degree
distribution. Note, however, that in the presence of spatial dependence
the normalization of the profile function
typically enforces a significant change to the attachment rule. As an
example, we look at the case when the
vertex $\mathbf{y}$ born at time $t$ connects to vertex $\mathbf{x}$
with probability
\[
\biggl(\frac{f(Z_\mathbf{x}(t-))}{t^\alpha d(\mathbf{x},\mathbf
{y})^\alpha} \biggr) \wedge1,
\]
for $\alpha>1$, where $a\wedge b$ denotes the minimum of $a$ and $b$.
In our setup, this must correspond to the normalized profile function
$\varphi(r):=(\frac{2\alpha}{\alpha-1} r)^{-\alpha} \wedge1$
and the attachment rule $f'(k):= \frac{2\alpha}{\alpha-1}
f^{1/\alpha
}(k)$. Thus if $f^{1/\alpha}$ is approximately linear with slope~$\gamma
$, the
resulting power law exponent is $\tau=1+\frac{\alpha-1}{2\gamma
\alpha}$.

\subsection{Outdegree distribution}\label{sec3.2}
In the original preferential attachment model of Barab\'asi and Albert,
the outdegree is constant. In the model variant of Dereich and M\"
orters, it is asymptotically Poisson, therefore it is light-tailed,
which implies that it is not relevant in the study of the tail of the
degree distribution. In our model, the limiting outdegree distribution
is not Poisson, and we could not find a closed formula defining it.
Still, we prove that it is light-tailed.

Denote by $\nu_t$ the empirical outdegree distribution in the graph
$G_t$, defined by its weights
\[
\nu_t(k) = \frac{1} {|\X_t|} \sum_{\mathbf{x}\in\X_t}
\one{\{ \outdeg_\mathbf{x}=k\}},
\]
if $\X_t\neq\varnothing$ and $\nu_t(k)=\one\{k=0\}$ otherwise. The
following theorem holds:

%
%
\begin{theorem} \label{theooutdegree}
There exists a probability measure $\nu$ on $\N\cup\{0\}$ such that:

\begin{longlist}[(2)]
\item[(1)] For any function $g\dvtx \N\cup\{0\} \to\R$ satisfying
$g(k)=o(e^{k^\delta})$ for some $0<\delta<1-\gamma$,
we have
\[
\langle\nu_t,g\rangle\longrightarrow\langle\nu,g\rangle,
\]
in probability, when $t\to\infty$.
\item[(2)] The measure $\nu$ is light-tailed in the following sense: for any
$0<\delta<1-\gamma$, we have
\[
\nu\bigl([k,+\infty)\bigr) = o\bigl(e^{-k^\delta}\bigr).
\]
\end{longlist}
\end{theorem}

The limiting outdegree distribution $\nu$ is implicitly defined [see
formula (\ref{limitingoutdegreeformula}) below],
but it is not easy to compute explicitly. Moreover, it is not hard to
see from our proofs that the indegree and the outdegree of
a randomly chosen vertex are asymptotically independent and hence the
limiting total degree distribution is the
convolution~$\mu\ast\nu$.

\subsection{Clustering}\label{sec3.3}
We now define the clustering coefficients for a finite simple graph
$G=(V,E)$ with unoriented edges,
forgetting the orientation of edges in the case of an oriented graph.
A subgraph of $G$ containing exactly three distinct vertices and the
three edges linking them is called a \emph{triangle}. A subgraph of the
form $(\{\mathbf{x}, \mathbf{y}, \mathbf{z}\}, \{\{\mathbf{x},
\mathbf{y}\}, \{\mathbf{x},\mathbf{z}\}\})$ is called an \emph
{open triangle with tip $\mathbf{x}$}. In other words, an open
triangle with
tip $\mathbf{x}$ consists of the vertex $\mathbf{x}$ and two of its
neighbors $\mathbf{y}$~and~$\mathbf{z}$, which
themselves could either be connected 
and hence form a triangle in $G$, or not. Note that every triangle in
$G$ contributes three open triangles.

The \emph{global clustering coefficient} of $G$ is defined as
\[
c^{\mathrm{glob}}(G): = 3 \frac{\mbox{Number of triangles included
in }G} {\mbox{Number of open triangles included in }G},
\]
if there is at least one open triangle in the graph, and $ c^{\mathrm
{glob}}(G)=0$ otherwise. Note that always $ c^{\mathrm{glob}}(G)\in[0,1]$.
The local clustering coefficient of $G$ at a vertex $\mathbf{x}$ with
degree at
least two is defined by
\[
c_{\mathbf{x}}^{\mathrm{loc}}(G): = \frac{\mbox{Number of triangles
included in
$G$ containing vertex } \mathbf{x}} {\mbox{Number of open triangles
with tip } \mathbf{x}
\mbox{ included in }G},
\]
which is also an element of $[0,1]$. Finally, the \emph{average
clustering coefficient} is defined as
\[
c^{\mathrm{av}}(G):= \frac{1} {|V_2|} \sum_{\mathbf{x}\in V_2}
c_{\mathbf{x}}^{\mathrm{loc}}(G),
\]
if the set $V_2\subset V$ of vertices with degree at least two in $G$
is not empty, and as $ c^{\mathrm{av}}(G):=0$ otherwise.

%
%
\begin{theorem}\label{theoclustering}
(1) \emph{Average clustering coefficient}:\\
There exists a strictly positive number $c^{\mathrm
{av}}_\infty$ such that
\[
c^{\mathrm{av}}(G_t) \longrightarrow c^{\mathrm{av}}_\infty
\]
in probability, as $t\to\infty$.

(2)~\emph{Global clustering coefficient}:
\begin{enumerate}[(a)]
\item[(a)] There exists a nonnegative number $c^{\mathrm{glob}}_\infty$
such that
\[
c^{\mathrm{glob}}(G_t) \longrightarrow c^{\mathrm{glob}}_\infty
\]
in probability, as $t\to\infty$.
\item[(b)] The global clustering coefficient $c^{\mathrm{glob}}_\infty$ is
positive if and only if $\sum k^2 \mu(k)<\infty$.
\end{enumerate}
\end{theorem}

%
%
\begin{rem}
Our proofs allow us to write $c^{\mathrm{glob}}_\infty$ and
$c^{\mathrm{av}}_\infty$ explicitly as
multiple integrals over the network parameters.
\end{rem}

%
%
\begin{rem}
The precise criterion given in Theorem~\ref{theoclustering}(2b)
implies that
$c^{\mathrm{glob}}_\infty>0$ if $\gamma<\frac{1}2$, and
$c^{\mathrm{glob}}_\infty=0$ if $\gamma>\frac{1}2$.
Hence the phase transition in the global clustering coefficient occurs
when the power law exponent crosses
the critical value $\tau=3$.
\end{rem}

%
%
\begin{rem}
The global and average clustering coefficients
have the following probabilistic interpretation:
\begin{itemize}
\item Pick a vertex uniformly at random and condition on the event
that this vertex has degree at least two.
Pick two of its neighbors, uniformly at random. Then the probability
that these two vertices are linked is equal to $c^{\mathrm{av}}(G)$.
\item Pick two edges sharing a vertex, uniformly from all such pairs of
edges in the graph. Then the probability that the two other
vertices bounding the edges are connected is equal to $c^{\mathrm{glob}}(G)$.
\end{itemize}
\end{rem}

Here is an informal discussion of the clustering phenomenon. For a
randomly chosen vertex, both the number of open triangles with tip in that
vertex as well as the number of triangles containing it converge to a
finite random variable. The ratio of these variables determines
the average clustering coefficient, which therefore is always positive.
To understand the phase transition in the behavior of the
global clustering coefficient, first note that, as the outdegree
distribution is always light-tailed, new vertices
typically generate a bounded number of triangles and hence the number
of triangles in the network grows linearly in time.
If $\sum k^2\mu(k)<\infty$ the average number of open triangles per
vertex is finite, and so the number of open triangles
also grows linearly in time, and the global clustering coefficient is
positive. However, if this sum is infinite, the total number of open
triangles has superlinear growth, which is enough to guarantee that the
global clustering coefficient vanishes. In this case,
the tip of a randomly chosen open triangle is typically a very old
vertex with a high degree. This is best seen in the case $\gamma>\frac{1}2$,
in which the degree of the first born vertex at time $t$ is of order
$t^\gamma$, so that this vertex alone gives rise to a superlinear
number $t^{2\gamma}$ of open triangles.
Observe that these effects match the structure of real networks. For
example, if you pick a webpage at random, and click on two hyperlinks,
it is likely that the two pages you get have actually a direct
hyperlink. Now, if you pick two webpages which both have a hyperlink to
the Google homepage, it is not likely that these two pages have a
direct link.

\subsection{Edge length distribution}\label{sec3.4}

In the graph $G_t$, we could hope that a typical edge connects two
vertices with birth times of order $t$ and degrees of order one. We
would then expect from the construction rule (\ref{constructionrule})
that its length is of order $1/t$. This description is actually always
valid within our range of parameters (it would be false for $\gamma\ge
1$), and explains the rescaling below.

Write $E(G_t)$ for the set of the edges of the graph $G_t$. Define
$\lambda$, the (rescaled) empirical edge length distribution, by
\[
\lambda_t = \frac{1} {|E(G_t)|} \sum_{(\mathbf{x}, \mathbf{y}) \in
E(G_t)}
\delta_{t d(\mathbf{x}, \mathbf{y})},
\]
if $E(G_t)\neq\varnothing$, and $ \lambda_t =\delta_0$ otherwise,
where $\delta_u$ is the Dirac measure giving mass one to $\{u\}$.

%
%
\begin{theorem}\label{theoedgelength}
There exists a probability distribution $\lambda$ on the real line
such that:
\begin{longlist}[(2)]
\item[(1)] For every continuous and bounded $g\dvtx [0,\infty) \to\R$ we have
\[
\langle\lambda_t,g\rangle\longrightarrow\langle\lambda,g\rangle,
\]
in probability, when $t\to\infty$.
\item[(2)]
Suppose that there exists $\delta>1$ such that the profile function satisfies
$ \varphi(x) \asymp1 \wedge x^{-\delta}$. Then
\[
\lambda\bigl([K, +\infty)\bigr) \asymp1\wedge K^{-\eta},
\]
where $\eta\in(0,1]$ is the smallest of the three constants $1$,
$\frac{1} \gamma- 1$ and $\delta-1$.
\end{longlist}
\end{theorem}

The heavy tails of the empirical edge length distribution highlight the
nature of our networks as \emph{small worlds}.
Observe that the distribution $\lambda$ never has a first moment,
implying that the \emph{mean edge length} is of larger
order than $1/t$. As the profile function $\varphi$ is integrable, if
it decays polynomially, it must be of order $x^{-\delta}$
for some $\delta>1$. If $\delta\ge2$, then the profile function does
not influence the decay rate of the tail of the limiting
edge length distribution. This stays true if $\varphi$ is any function
satisfying $\int v \varphi(v)\,\mathrm{d}v <\infty$. Conversely,
a choice of $\varphi$ can lead to any exponent within $(0,1]$ if
$\gamma\le1/2$, or within $(0,1/\gamma- 1]$
if $\gamma>1/2$; see Figure~\ref{fig1}.

%
%
\begin{figure}

\includegraphics{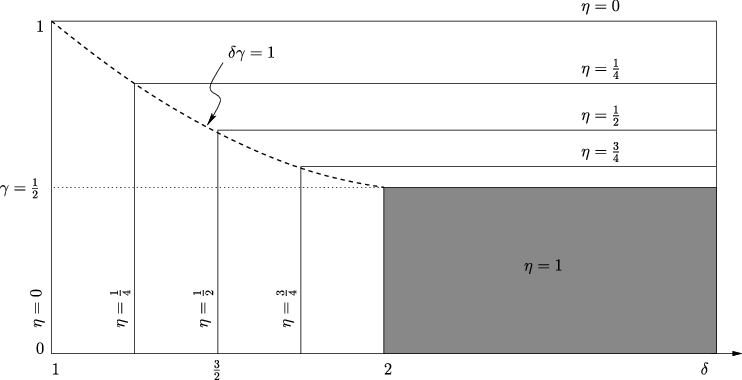}

\caption{Level sets for the length exponent~$\eta$ in the $(\delta,
\gamma)$ plane consist of
a rectangular block corresponding to the value $\eta=1$ and a family of
lines starting vertically at
the $\delta$-axis and turning horizontally upon hitting the graph given
by $\delta\gamma=1$.}\label{fig1}
\end{figure}

In Janssen, Pra\l{}at and
Wilson \cite{JPW} the empirical edge length distribution is
studied for the model defined in~\cite{Aiello}. This
is essentially the case of an affine function $f(k)=\gamma k+\beta$ and
a profile function $\varphi(x)=p\one\{ x<1/(2p)\}$, corresponding
roughly to the case $\delta=\infty$. They show that if $\gamma>\frac{1}2$
and $\frac{3\gamma+2}{4\gamma+2}<\alpha<1$, then
\[
\bigl| \bigl\{\mbox{edges of length longer than $t^{-\alpha}$} \bigr\} \bigr|
\sim C
t^{(2-\alpha)+(({1}/\gamma)(\alpha-1))}
\]
for an explicit constant $C>0$. Our result uses a different order of
limits, but leads to the same order of growth for the comparable
quantity $t \lambda[t^{1-\alpha},\infty)$. If $\gamma<\frac{1}2$ they
show that
the \emph{expected} number of edges of length longer than $t^{-\alpha
}$, for $0\leq\alpha<1$, grows of order $t^\alpha$, which is also
of the same order as $t \lambda[t^{1-\alpha},\infty)$. Note that the
general form of the profile functions allows for a genuinely richer
phenomenology in our case.

\section{Methods of proof}\label{sec4}

\subsection{The rescaled picture}\label{sec4.1}

First, it is convenient to describe more explicitly the randomness
involved in the ``construction rule,'' which determines
the presence or absence of each edge in the network. To this end,
denote by $\T_1\times(0,\infty)$ the set of \emph{potential vertices},
and by
\[
E\bigl(\T_1\times(0,\infty)\bigr):= \bigl\{ (\mathbf{y},\mathbf{x}),
\mathbf{y}, \mathbf{x}\in\T_1\times(0,\infty), \mathbf{y}\mbox{
younger than }\mathbf{x}\bigr\}
\]
the set of \emph{potential edges}. Introduce a family~$\V$ of
independent random variables, independent of $\X$, indexed by the set
of potential edges and uniformly distributed on $[0,1]$. We will denote
these variables by $\V_{\mathbf{x},\mathbf{y}}$ or $\V(\mathbf
{x},\mathbf{y})$. A realization of $\X_t$
and $\V$ defines a network $G^1(\X_t, \V)$, with vertex set $\X_t$,
obtained with the same construction as before, but with the
construction rule replaced by the rule that you connect $\mathbf{x}$
to $\mathbf{y}$ if
and only if
%
%
\begin{equation}
\label{attachmentrule} \V(\mathbf{x}, \mathbf{y}) \le\varphi\biggl
(\frac{s d(\mathbf
{x},\mathbf{y})} {f(Z_\mathbf{x}(s-))}
\biggr),
\end{equation}
where $s$ is the birth time of the younger vertex $\mathbf{y}$.
The growing networks $(G^1(\X_t, \V))_{t>0}$ and $(G_t)_{t>0}$ have the
same law and will be identified.
Moreover, the deterministic functional $G^1$ associates a graph
structure to any set of points
in $\T_1\times(0,\infty)$ and family of points in $[0,1]$ indexed by
$E(\T_1\times(0,\infty))$.

Second, we want to generalize the construction, replacing $\T_1$ by
$\T
_t=\R/(t\Z)$, the one-dimensional torus of length $t$. We permit the
case $t=\infty$, with the convention $\T_\infty=\R$. The definition of
the set of potential vertices $\T_t\times(0,\infty)$ and the set of
potential edges $E(\T_t\times(0,\infty))$ is straightforward.
We define the functional~$G^t$, for $t\in(0,\infty]$, in analogy to
the case $t=1$, by associating a graph structure to any set of points
in $\T_t\times(0,\infty)$, and any family of values in $[0,1]$ indexed
by $E(\T_t\times(0,\infty))$. In the construction,
rule~(\ref{attachmentrule}) is unchanged, but with the new
understanding that the distances are now those in $\T_t$.

For finite $t$, we introduce the rescaling mapping
\begin{eqnarray*}
h_t\dvt  \T_1\times(0,t] &\to& \T_t \times(0,1],
\\
(x,s) &\mapsto&(tx, s/t)
\end{eqnarray*}
which expands the space by a factor $t$, the time by a factor $1/t$.
The mapping $h_t$ operates on the set $\X$, but also on $\V$, with
\[
h_t(\V)_{h_t(\mathbf{x}),h_t(\mathbf{y})}: = \V_{\mathbf
{x},\mathbf{y}}.
\]
The operation of $h_t$ preserves the rule~(\ref{attachmentrule}), and
it is therefore simple to verify that we have
\[
G^t\bigl(h_t(\X_t), h_t(\V)
\bigr)= h_t\bigl(G^1(\X_t, \V)
\bigr)=h_t(G_t),
\]
that is, it is the same to construct the graph and then rescale the
picture, or to first rescale the picture, then construct the graph on
this rescaled picture. Observe also that $h_t(\X_t)$ is a Poisson
point process of
intensity $1$ on $\T_t \times(0,1]$, while $h_t(\V)$ is still an
independent family of i.i.d. uniform random variables on $[0,1]$,
indexed by $E(\T_t\times(0,1])$.

From now on, we denote by $\Y$ a Poisson point process with intensity
1 on $\R\times
(0,1]$, and $\V$ an independent family of i.i.d. uniform on $[0,1]$
random variables, indexed by $E(\R\times(0,1])$.
For finite $t>0$, identify $(-t/2, t/2]$ and $\T_t$, and write $\Y^t$
for the restriction of $\Y$ to $\T_t \times(0,1]$, and $\V^t$ for the
restriction of $\V$ to the indices in $E(\T_t \times(0,1])$. We write
$G^t(\Y, \V)$ for $G^t(\Y^t, \V^t)$, and observe that this graph has
the same law as $h_t(G_t)$. However, the process $ t\mapsto G^t(\Y, \V
)$ behaves very differently from the original
process $t\mapsto G_t$. Indeed, in the original process, the degree of
any fixed vertex grows like $t^{\gamma+ o(1)}$ (see Lemma~\ref
{polynomialgrowth}) and thus goes to $+\infty$. By contrast, for the
graphs $G^t(\X, \V)$, the following result establishes convergence to
the graph $G^\infty(\Y, \V)$ as defined in the preceding paragraph.

%
%
\begin{prop}\label{propositionlocalconvergence}
\textup{(i)} The graph $G^\infty(\Y, \V)$ defined above is almost surely
locally finite, in the sense that its vertices all have finite degrees.

\textup{(ii)} The graph $G^t(\Y, \V)$, almost surely, converges locally
to $G^\infty(\Y, \V)$, in the sense that for each $\mathbf{x}\in\Y
$, for
large $t$,
the neighbors of $\mathbf{x}$ in $G^t(\Y, \V)$ and in $G^\infty(\Y, \V)$ coincide.
\end{prop}

As a direct consequence we obtain the following corollary.\vadjust{\goodbreak}

%
%
\begin{cor}
Almost surely, for any $\mathbf{x}\in\Y$ and each $n\geq1$, the neighborhood
of vertex $\mathbf{x}$ in the
graphs $G^t(\Y, \V)$ and $G^\infty(\Y, \V)$ up to graph distance $n$
will coincide for large~$t$.
\end{cor}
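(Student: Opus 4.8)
The plan is to argue by induction on $n$, bootstrapping the single-vertex neighbour convergence of Proposition~\ref{proposition_local_convergence}(ii) into convergence of the entire ball of radius $n$, while using the local finiteness from part~(i) to keep the number of vertices involved finite. First I would fix, once and for all, an almost-sure event $\Omega_0$ on which \emph{both} assertions of Proposition~\ref{proposition_local_convergence} hold simultaneously for every $\x\in\Y$; since $\Y$ is almost surely countable and each assertion is prefaced by ``almost surely'', such an event exists, and everything below is carried out deterministically on $\Omega_0$. For $\x\in\Y$, $t\in(0,\infty]$ and $n\ge 0$, write $B_n^t(\x)$ for the induced subgraph of $G^t(\Y,\V)$ on the vertices lying at graph distance at most $n$ from $\x$. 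The goal is to show that for each $\x$ and each $n$ there is a (realization-dependent) threshold beyond which $B_n^t(\x)=B_n^\infty(\x)$, vertices and induced edges alike.

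The base case $n=0$ is trivial, since the ball is $\{\x\}$ in every graph. For the inductive step, assume that for some $n\ge 1$ and all large $t$ we have, as vertex sets, $B_{n-1}^t(\x)=B_{n-1}^\infty(\x)=:W$. By Proposition~\ref{proposition_local_convergence}(i) the graph $G^\infty(\Y,\V)$ is locally finite, and a ball of finite radius in a locally finite graph is a finite vertex set; hence $W$ is finite. For each $\mathbf w\in W$, part~(ii) of the Proposition provides a threshold $T_{\mathbf w}$ such that the neighbours of $\mathbf w$ in $G^t(\Y,\V)$ and in $G^\infty(\Y,\V)$ coincide for all $t\ge T_{\mathbf w}$, and because $W$ is finite the quantity $T:=\max_{\mathbf w\in W}T_{\mathbf w}$ is finite. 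A vertex lies in $B_n^t(\x)$ precisely when it lies in $W=B_{n-1}^t(\x)$ or is a $G^t$-neighbour of some vertex of $W$, and the analogous description holds for $B_n^\infty(\x)$; since the two neighbour relations on $W$ agree for $t\ge T$, the vertex sets of $B_n^t(\x)$ and $B_n^\infty(\x)$ coincide for $t\ge T$. Applying part~(ii) once more to each vertex of this common finite set shows that for large $t$ all edges incident to it, in particular all edges internal to the ball, also agree, which completes the induction.

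Finally, I would assemble the global statement: on $\Omega_0$ the induction runs for each of the countably many pairs $(\x,n)$, and intersecting over these gives the corollary on a single full-measure event. The one genuinely delicate point is the inductive step's reliance on $W$ being finite: without local finiteness the collection of thresholds $T_{\mathbf w}$ could be infinite and $T=\max_{\mathbf w}T_{\mathbf w}$ need not be finite, so Proposition~\ref{proposition_local_convergence}(i) is exactly what legitimises the bootstrap from one vertex to the whole ball. A secondary point worth checking is that no ``shortcut'' edge present in $G^t$ but absent in $G^\infty$ can place a spurious vertex at distance $n$; this is automatic, since any such edge is incident to a vertex of $W$, where the neighbour relations already coincide for $t\ge T$.
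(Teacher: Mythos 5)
Your argument is correct and is exactly what the paper has in mind when it states the corollary as ``a direct consequence'' of Proposition~\ref{proposition_local_convergence} without writing out a proof: an induction on $n$ in which part~(i) guarantees the ball of radius $n-1$ is finite, so that the finitely many thresholds supplied by part~(ii) can be maximised to propagate the agreement of neighbourhoods one step further. No gap; same approach.
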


The key to the understanding of the drastically different behavior of
the graph-valued process $t \mapsto G^t(\Y, \V)$
lies in the fact that a fixed vertex in this sequence of graphs has a
birth time which is comparable to the age of the network.
This age would be highly variable in time if mapped onto the original
graph, but is kept constant in the process $t \mapsto G^t(\Y, \V)$.

Regardless of the strength of Proposition~\ref
{propositionlocalconvergence}, it only states a local convergence
result and
is therefore insufficient for our purpose. Global results require the
introduction of a specific law of large numbers, which we state
and prove now.

\subsection{A general weak law of large numbers}\label{sec4.2}


For $x_0 \in\R$, we introduce the translation
\begin{eqnarray*}
\theta_{x_0}\dvt  \R\times(0,1] &
\to& \R\times(0,1],
\\
(x,s)& \mapsto&(x+x_0, s).
\end{eqnarray*}
The translation $\theta_{x_0}$ operates on $\R\times(0,1]$, and in a
canonical manner also on
the point sets in $\R\times(0,1]$, and on families indexed by $E(\R
\times(0,1])$. Consider a
functional $\xi_\infty$, which associates a nonnegative real number
$\xi
_\infty((x,s),\setZ, \W)$ to a point set $\setZ\subset\R\times(0,1]$
with a distinguished point $\mathbf{x}=(x,s) \in\setZ$, and a family
$\W$ of
numbers in $[0,1]$ indexed by $E(\R\times(0,1])$. The functional is
supposed to be translation invariant, in the sense that
\[
\xi_\infty(\mathbf{x},\setZ,\W)= \xi_\infty\bigl(\theta
_{x_0}(\mathbf{x}),\theta_{x_0}(\setZ),
\theta_{x_0}(\W)\bigr).
\]
Similarly, for each $t>0$, and $x_0 \in\T_t$, we introduce the translation
\begin{eqnarray*}
\theta^t_{x_0}\dvt \T_t \times(0,1] &\to& \T_t \times(0,1],
\\
(x,s)& \mapsto&(x+x_0, s),
\end{eqnarray*}
and we consider functionals $\xi_t$, which associate a nonnegative real
number $\xi_t((x,s),\setZ, \W)$
to a point set $\setZ\subset\T_t \times(0,1]$ with a distinguished
point $(x,s) \in\setZ$
and a family $\W$ of numbers in $[0,1]$ indexed by $E(\T_t \times
(0,1])$. The functionals $\xi_t$ are
supposed to be invariant under the translations $\theta^t_{x_0}$.

Finally, for the sake of simplifying notation, we will also write $\xi
_\infty(\mathbf{x},\setZ, \W)$ for $\xi_\infty(\mathbf{x},\setZ
\cup\{\mathbf{x}\}, \W)$ when
the set $\setZ$ does not contain $\mathbf{x}$, and similarly $\xi
_t(\mathbf{x},\setZ, \W
)$ for $\xi_t(\mathbf{x},\setZ\cup\{\mathbf{x}\}, \W)$.
We also write
\begin{eqnarray*}
\xi_\infty(\setZ, \W) &:=& \int_0^1
\xi_\infty\bigl((0,s), \setZ, \W\bigr)\,\mathrm{d}s,
\\
\xi_t(\setZ, \W) &:=& \int_0^1
\xi_t\bigl((0,s), \setZ, \W\bigr) \,\mathrm{d}s.
\end{eqnarray*}
Recall the notation of the Poisson point process $\Y$ and of the
family of random
variables~$\V$, as well as their restrictions $\Y^t$ and $\V^t$.
In the following theorem, $U$ denotes a random variable, uniform on
$(0,1]$, and independent of the point process~$\Y$ and of~$\V$.

%
%
\begin{theorem}[(Weak law of large numbers)]\label{theoLLN}
Suppose that the following two conditions hold:
\begin{longlist}[(A)]
\item[(A)] as $t\to\infty$, the random variable $\xi_t((0,U), \Y
^t, \V
^t)$ converges in probability to the
random variable~$\xi_\infty((0,U), \Y, \V)$;
\item[(B)] for some $p>1$ we have the uniform moment condition
\[
\sup_{t>0} \mathbb E\bigl[\xi_t\bigl((0,U),
\Y^t, \V^t\bigr)^p\bigr]<\infty.
\]
\end{longlist}
Then, as $t\to\infty$, we have the following convergence in the $L^1$-sense:
%
%
\begin{equation}
\label{LLN} \frac{1} t \sum_{\mathbf{x}\in\Y^t}
\xi_t\bigl(\mathbf{x}, \Y^t, \V^t\bigr)
\longrightarrow\mathbb E\bigl[\xi_\infty\bigl((0,U),\Y, \V\bigr)\bigr]=
\mathbb E\bigl[\xi_\infty(\Y, \V)\bigr].
\end{equation}
\end{theorem}

%
%
\begin{rem} \label{remLLN}
\textup{(i)} Theorem~\ref{theoLLN} is an adaptation of Theorem~2.1 of
Penrose and Yukich~\cite{YukichPenrose} to our purpose. Their result
also includes a de-Poissonisation, but this is incompatible with our
set-up because of the explicit time dependence of the attachment
probabilities.

\textup{(ii)} Suppose now that only condition \textup{(A)} is satisfied. On the
one hand, the proof still works
if the family $(\xi_t((0,U), \Y^t, \V^t))_{t>0}$
is uniformly integrable. On the other hand, if $ \mathbb E[\xi_\infty
(\Y, \V
)]=\infty$, then, by applying the theorem to the bounded functional
$\xi^k_t(\mathbf{x}, \setZ, \W):=\xi_t(\mathbf{x}, \setZ, \W)
\wedge k$
and letting $k$ go to $\infty$, we get the convergence in probability of
\[
\frac{1} t \sum_{\mathbf{x}\in\Y^t}\xi_t
\bigl(\mathbf{x}, \Y^t, \V^t\bigr)
\]
to $+\infty$. The only case when the theorem does not yield any
convergence result is when
$ \mathbb E[\xi_\infty( \Y, \V)]$ is finite, but the family $(\xi
_t((0,U), \Y
^t, \V^t))_{t>0}$ fails to be
uniformly integrable.
\end{rem}

\begin{pf}
As in Theorem~2.1 in~\cite{YukichPenrose} the proof relies on a first
moment calculation, and then a second moment calculation which is
performed under
a stronger uniform moment condition, and finally a step to allow the
removal of this extra condition.

\emph{First moment}: We compute, by Campbell's formula,
\begin{eqnarray*}
\mathbb E \biggl[\frac{1} t \sum_{\mathbf{x}\in\Y^t}
\xi_t\bigl(\mathbf{x}, \Y^t, \V^t\bigr)
\biggr] & =& \int_{\T_t \times(0,1]} \frac{\mathrm{d}x \,\mathrm{d}s}t
\mathbb E \bigl[
\xi_t\bigl((x,s), \Y^t, \V^t \bigr) \bigr]
\\
&=& \int_0^1 \mathrm{d}s\,\mathbb E \bigl[
\xi_t\bigl((0,s), \Y^t, \V^t\bigr) \bigr]
\\
&=&
\mathbb E \bigl[\xi_t\bigl((0,U), \Y^t, \V^t
\bigr) \bigr].
\end{eqnarray*}
Note that in all these expressions but the first one, a point is added
to $\Y^t$. The second equality follows from the spatial invariance
by the translation $\theta_{-x}^t$, both of the functional $\xi_t$ and
of the law of $(\Y^t, \V^t)$.
Now condition~\textup{(A)} states that the variables $\xi_t((0,U), \Y^t, \V^t)$
converge in probability to
$\xi_\infty((0,U),\Y, \V)$. Condition~\textup{(B)} ensures that they are
uniformly integrable. Therefore we have convergence of the expectations
$\mathbb E[\xi_t((0,U), \Y^t, \V^t)]$ to $\mathbb E[\xi_\infty
((0,U), \Y, \V)]$
, and this expectation is finite.

\emph{Second moment:} We work here under the stronger assumption that
the uniform moment condition holds for some $p>2$. Similarly as in the
case of
the first moment, we get
\begin{eqnarray*}
&& \mathbb E \biggl[  \biggl(\frac{1} t \sum_{\mathbf{x}\in\Y^t}
\xi_t \bigl(\mathbf{x}, \Y^t, \V^t \bigr)
\biggr)^2 \biggr]
\\
&&\qquad = \mathbb E \biggl[\frac{1} {t^2} \sum_{\mathbf{x}\in\Y^t}
\xi_t \bigl(\mathbf{x},\Y^t, \V^t
\bigr)^2 \biggr] + \mathbb E \biggl[\frac{1} {t^2} \mathop{\sum
_{\mathbf{x},
\mathbf{x}' \in\Y^t}}_{\mathbf{x}\ne\mathbf{x}'} \xi_t \bigl(
\mathbf{x}, \Y^t, \V^t \bigr) \xi_t \bigl(
\mathbf{x}', \Y^t, \V^t \bigr) \biggr]
\\
&&\qquad = \frac{1} t \mathbb E \bigl[ \xi_t \bigl((X_1,U_1),
\Y^t, \V^t \bigr)^2 \bigr]
\\
&&\quad\qquad{}  + \mathbb E \bigl[\xi_t \bigl((tX_1,U_1),
\Y^t \cup\bigl\{ 
(tX_2,U_2)\bigr
\}, \V^t \bigr)
\\
&&\hspace*{54pt}{}\times  \xi_t \bigl((tX_2,U_2),
\Y^t \cup\bigl\{(tX_1,U_1) \bigr\},
\V^t \bigr) \bigr],
\end{eqnarray*}
with $X_1$ and $X_2$ uniform in $\T_1$, $U_1$ and $U_2$ uniform in
$(0,1)$, and $\Y$, $X_1$, $U_1$, $X_2$, $U_2$ independent.
The first term goes to zero, thanks to the uniform moment condition
with $p>2$ ($p=2$ would be enough).

Now, the second term is the expectation of the following product of
random variables:
%
%
\begin{eqnarray}\label{product}
&& \xi_t\bigl((0,U_1),
\theta^t_{-tX_1}\bigl(\Y^t\bigr) \cup\bigl\{
\bigl(t(X_2-X_1),U_2\bigr)\bigr\}, \theta
^t_{-tX_1}\bigl(\V^t\bigr)\bigr)
\nonumber\\[-8pt]\\[-8pt]
&&\quad{} \times\xi_t\bigl((0,U_2),
\theta^t_{-tX_2}\bigl(\Y^t\bigr) \cup\bigl\{
\bigl(t(X_1-X_2),U_1\bigr)\bigr\},
\theta^t_{-tX_2}\bigl(\V^t\bigr)\bigr),\nonumber
\end{eqnarray}
whose behavior we have to understand. We first concentrate on the first term.
Write
\[
\widetilde\Y^t: =\theta^t_{-tX_1}\bigl(
\Y^t\bigr) \cup\bigl\{\bigl(t(X_2-X_1),U_2
\bigr)\bigr\},\qquad \widetilde\V^t: =\theta^t_{-tX_1}
\bigl(\V^t\bigr).
\]
We introduce three events, $E_t:=\{t d(X_1,X_2)>\sqrt t\}$,
$F_t:= \{t d(X_1, 1/2)>\sqrt t /2\}$ and
$G_t$ the event that the Poisson point process $\Y^t$ has at least one
point in $\{
(x,s) \dvtx  d(x,0)> \sqrt t \}$. These are all
\emph{asymptotically almost sure} (\emph{a.a.s.}), in the sense that their
probability goes to one when $t\to\infty$.
We make two important observations:
\begin{itemize}
\item On the event $E_t\cap F_t$, the restrictions to $\T_{\sqrt
t}\times(0,1]$ of the sets $\widetilde\Y^t$ and $\theta_{-tX_1}(\Y
)$ coincide.
Similarly, the restrictions to $\T_{\sqrt t}\times(0,1]$ of the
families $\widetilde\V^t$ and $\theta_{-tX_1}(\V)$ also coincide.\vspace*{1pt}
\item The law of $(\widetilde\Y^t, \widetilde\V^t)$ knowing $E_t$
equals the law of
$(\Y^t, \V^t)$ knowing $G_t$.
\end{itemize}

These observations allow the following calculation, with $\eta$ some
positive real number.
Note that we will apply now (and until the end of this proof) the
functional $\xi_{\sqrt t}$ to point sets on $\R\times(0,1]$ or $\T_t
\times(0,1]$ (and families indexed by $E(\R\times(0,1])$ or $E(\T_t
\times(0,1])$). This is only to lighten the notation a bit. It should
always be understood that the functional is applied to the restrictions
on $\T_{\sqrt t} \times(0,1]$.
\begin{eqnarray*}
&& \limsup_{t\to\infty}  \P\bigl\{\bigl|\xi_t
\bigl((0,U_1), \widetilde\Y^t, \widetilde\V^t
\bigr) - \xi_{\sqrt t} \bigl((0,U_1), \theta_{-tX_1}(
\Y), \theta_{-tX_1}(\V)\bigr) \bigr| >\eta\bigr\}
\\
&&\qquad = \limsup_{t\to\infty} \mathbb E \bigl[\un\bigl\{\bigl|\xi_t
\bigl((0,U_1), \widetilde\Y^t, \widetilde\V^t
\bigr)
\\
&&\hspace*{88pt}{}- \xi_{\sqrt t} \bigl((0,U_1), \theta_{-tX_1}(
\Y), \theta_{-tX_1}(\V)\bigr) \bigr| >\eta\bigr\} \un_{F_t} |
E_t \bigr]
\\
&&\qquad = \limsup_{t\to\infty} \mathbb E \bigl[\un\bigl\{\bigl|\xi_t
\bigl((0,U_1), \widetilde\Y^t, \widetilde\V^t
\bigr) - \xi_{\sqrt t} \bigl((0,U_1), \widetilde
\Y^t, \widetilde\V^t\bigr) \bigr| >\eta\bigr\}
\un_{F_t} | E_t \bigr]
\\
&&\qquad = \limsup_{t\to\infty} \mathbb E \bigl[\un\bigl\{\bigl|\xi_t
\bigl((0,U_1), \Y^t, \V^t\bigr) - \xi
_{\sqrt t} \bigl((0,U_1), \Y^t, \V^t
\bigr) \bigr| >\eta\bigr\} \un_{F_t} | G_t \bigr]
\\
&&\qquad = \limsup_{t\to\infty} \P\bigl\{\bigl|\xi_t
\bigl((0,U_1), \Y^t, \V^t\bigr) - \xi
_{\sqrt t} \bigl((0,U_1), \Y^t, \V^t
\bigr) \bigr| >\eta\bigr\} = 0.
\end{eqnarray*}
The last equality uses condition \textup{(A)}.
Hence, the variable
\[
\xi_t\bigl((0,U_1), \widetilde\Y^t,
\widetilde\V^t\bigr) - \xi_{\sqrt t} \bigl((0,U_1),
\theta_{-tX_1}(\Y), \theta_{-tX_1}(\V)\bigr)
\]
converges in probability to zero. Similarly, one can see that the variable
\begin{eqnarray*}
&& \xi_t \bigl((0,U_2), \theta^t_{-tX_2}
\bigl(\Y^t\bigr) \cup\bigl\{\bigl(t(X_1-X_2),U_1
\bigr)\bigr\}, \theta^t_{-tX_2}\bigl(\V^t\bigr)
\bigr)
\\
&&\quad {}  - \xi_{\sqrt t} \bigl((0,U_2), \theta_{-tX_2}(
\Y), \theta^t_{-tX_2}(\V) \bigr)
\end{eqnarray*}
converges in probability to zero. Next, observe that the two variables
\[
\xi_{\sqrt t}\bigl((0,U_1), \theta_{-tX_1}(\Y),
\theta_{-tX_1}(\V)\bigr)
\]
and
\[
\xi_{\sqrt t}\bigl((0,U_2), \theta_{-tX_2}(\Y),
\theta_{-tX_2}(\V)\bigr)
\]
are independent conditionally on the event $E_t$. Moreover, observe
that the law of each one converges to that of $\xi_\infty((0,U_1), \Y,
\V)$, thanks to condition \textup{(A)} again.
Gathering the results, we get that the product in~(\ref{product})
converges in law to the product of two independent copies of $\xi
_\infty
((0,U_1), \Y, \V)$.

Finally, use Cauchy--Schwarz to get a uniform moment condition for this
product for $\frac{p} 2>1$. Hence the expectation of the product goes to
$\mathbb E[\xi_\infty((0,U_1),\break \Y, \V)] ^2$. Therefore we get~(\ref{LLN}),
with convergence even in $L^2$.

\emph{Relaxing the moment condition}: We finally work under the
assumptions of the theorem, that is, the uniform moment condition is
satisfied only for some $p>1$. Introduce the bounded functional
\[
\xi^k_t(\mathbf{x}, \setZ, \W):=\xi_t(
\mathbf{x}, \setZ, \W) \wedge k.
\]
This functional clearly satisfies condition \textup{(A)} and the uniform moment
condition for any $p$, in particular for some $p>2$. Therefore, we get
the convergence of
\[
\frac{1} t \sum_{\mathbf{x}\in\Y^t} \xi_t^k
\bigl(\mathbf{x}, \Y^t, \V^t\bigr)
\]
to $\mathbb E[\xi_\infty((0,U_1),\Y, \V)\wedge k ]$, in $L^2$, and
thus in $L^1$.
Now note that
\begin{eqnarray*}
&& \mathbb E \biggl[\frac{1}t \sum
_{(x,s) \in\Y^t}  \bigl(\xi_t\bigl((x,s), \Y^t,
\V^t\bigr) -\xi_t^k\bigl((x,s),
\Y^t, \V^t\bigr)\bigr) \biggr]
\\
&&\qquad  = \mathbb E \bigl[\xi_t\bigl((0,U), \Y^t,
\V^t\bigr)- \xi_t^k\bigl((0,U),
\Y^t, \V^t\bigr) \bigr],
\end{eqnarray*}
which is nonnegative and goes uniformly to zero for $k\to\infty$, as
the variables $\xi_t((0,U), \Y^t, \V^t)$ are uniformly integrable,
by the uniform moment condition. It follows that
\[
\frac{1} t \sum_{\mathbf{x}\in\Y^t} \xi_t
\bigl(\mathbf{x}, \Y^t, \V^t\bigr)
\]
converges in $L^1$ to the limit of $\mathbb E[\xi_\infty((0,U),\Y,
\V)\wedge k
]$, that is $\mathbb E[\xi_\infty((0,U),\break\Y, \V)]$.
\end{pf}

\subsection{A bound on the indegree and on the linking probability}\label{sec4.3}

As we consider various graphs on various spaces, we need to introduce
more flexible notation for the degrees.
If $G$ is a graph with vertices in $\T_t \times(0,\infty)$ we write
$\mathbf{x}\leftrightarrow\mathbf{y}$ to indicate that there is an
edge between
the vertices $\mathbf{x}$ and $\mathbf{y}$.
If $\mathbf{x}_0=(x_0,s_0)$ is in $G$, then, for any $s\ge s_0$, we define
\[
Z_{\mathbf{x}_0}(s, G) = \bigl| \bigl\{(x,r) \in G \dvtx
(x,r)\leftrightarrow
(x_0,s_0), s_0 <r \le s \bigr\} \bigr|,
\]
the indegree of $\mathbf{x}_0$ in $G$ ``at time $s$'' and
\[
Y_{\mathbf{x}_0}(G) = \bigl| \bigl\{(x,r) \in G \dvtx (x,r)\leftrightarrow
(x_0,s_0), r <s_0 \bigr\} \bigr|,
\]
its outdegree. For $t \in(0,\infty]$ and for $0<s_0\le s \le1$, we write
\[
Z^t_{\mathbf{x}_0}(s) = Z_{\mathbf{x}_0}\bigl(s,G^t
\bigl(\Y\cup\{\mathbf{x}_0\}, \V\bigr)\bigr)\quad\mbox{and}\quad
Y^t_{\mathbf{x}_0}=Y_{\mathbf
{x}_0}\bigl( G^t\bigl(\Y
\cup\{\mathbf{x}_0\}, \V\bigr)\bigr).
\]
For fixed $t$ and $\mathbf{x}_0$, call $(Z^t_{\mathbf
{x}_0}(s))_{s_0\le s \le1}$ the
\emph{indegree process}. In this part only, we extend the Poisson
point process $\X$ on
the whole $\R\times(0,\infty)$, and allow any $0<s_0\le s$ in the
definition of $Z^t_{\mathbf{x}_0}(s)$. For $\mathbf{x}_0 \in\T_1
\times(0,\infty)$,
the process $(Z^1_{\mathbf{x}_0}(s))_{s\ge s_0}$ has the same law as the
process $(Z_{\mathbf{x}_0}(s))_{s\ge s_0}$ introduced earlier in
Section~\ref{sec2}, so
that the results of this part apply simultaneously for the rescaled
graphs and for the unrescaled ones.
Now, observe that the law of the indegree process does not depend on
the spatial position $x_0 \in\T_t$. Therefore, we
simply write $Z^t_{s_0}(s)$ for $Z^t_{(0,s_0)}(s)$ and $Y^t_{s_0}$ for
$Y^t_{(0,s_0)}$. If $\mathbf{x}$ and $\mathbf{y}$ are two vertices in
$\Y$, we write
$\mathbf{x}\underset t {\leftrightarrow} \mathbf{y}$ for the event
that $\mathbf{x}$ and $\mathbf{y}$
are linked in
$G^t(\Y, \V)$.

\begin{lemma}\label{polynomialgrowth}
For all $t>0$ and $\mathbf{x}_0\in\T_t$, we have almost surely
\[
\ln Z^t_{\mathbf{x}_0}(s) \sim\gamma\ln s\qquad\mbox{as } s \to
\infty.
\]
\end{lemma}

This lemma confirms that the degree of a fixed vertex in the unrescaled
graphs grows polynomially of order~$\gamma>0$,
and in particular that it explodes.
Before proving it 
we give a bound on the probability that a vertex reaches an
exceptionally high degree,
allowing it to be connected to an exceptionally distant vertex.
Exponential bounds, uniform in $t$, are provided in the following lemma
and its corollary. For the sake of simplicity, they are only stated in
the case of a linear function $f$. We refer to Remark~\ref{genrem} for the
general case.

%
%
\begin{lemma} \label{exponentialbound}
Suppose $f(k)=\gamma k + \beta$, with $\gamma\in(0,1)$ and $\beta>0$.
Let $p= \lceil\frac{\beta} \gamma-1\rceil$, so that $f(k)\le
\gamma
(k+p+1)$. For any $t \in(0,\infty]$, any $s_0<s\le1$ and any $k\ge0$,
the following inequality holds:
%
%
\begin{equation}
\label{boundindegree} \P\bigl\{ Z^t_{s_0}(s)\ge k \bigr\} \le
e^{{p}/ 4} \exp\biggl(-\frac{k} 8 \biggl(\frac{s_0} s
\biggr)^{\gamma} \biggr).
\end{equation}
\end{lemma}
%

%
%
\begin{cor} \label{linkingbound}
Under the assumptions of Lemma~\ref{exponentialbound}, define the
inverse of the profile function~$\varphi$ by
\[
\varphi^{-1}(u):= \inf\bigl\{x>0 \dvtx \varphi(x)< u\bigr\}.
\]
Then there is a constant $c$ depending only on $f$, such that for any
$t \in(0,\infty]$ and any $(x,s)\in\R\times(0,1]$, we have
%
%
\begin{eqnarray}\label{linkingboundequation}
&& \P\bigl\{ (0,1) \underset t
{\leftrightarrow} (x,s) | (0,1) \in
\Y, (x,s) \in\Y, \V_{(0,1),(x,s)}=u \bigr\}
\nonumber\\[-8pt]\\[-8pt]
&&\qquad \le c \exp\biggl( -
\frac{|x| s^\gamma} {8 \gamma\varphi^{-1}(u)} \biggr).\nonumber
\end{eqnarray}
\end{cor}

%
%
\begin{rem}\label{genrem}
In the nonlinear case, we can first bound $f$ from above by a
linear function, then, by an easy stochastic domination argument, get
the inequalities of the lemma and its corollary with the linear bound
instead of $f$. We get almost equally good bounds. More precisely, for
any $\gamma'>\gamma$, we can find $\beta'>0$ such that $f(k) \le
\gamma
' k + \beta'$ for any natural number $k$, and we thus get bounds for
any exponent $\gamma'>\gamma$.
\end{rem}

A first corollary of Lemma~\ref{exponentialbound} is that the indegree
$Z^t_{\mathbf{x}_0}(s)$ is always almost surely finite,
even when $t=\infty$. The same holds for the outdegree; see
Proposition~\ref{limitingoutdegree} below.

At this stage, let us discuss the important \emph{monotonicity
property}. If we fix $s_0$ and $s$ and let $t$ grow to $+\infty$, then
$Z^t_{s_0}(s)$ will grow and converge to $Z^\infty_{s_0}(s)$. Moreover,
if we change the position of the vertex to be nonzero, we do not
change the law of its indegree and therefore its indegree will still be
\emph{stochastically increasing} in~$t$ and \emph{stochastically
dominated} by $Z^\infty_{s_0}(s)$.
By contrast, no such property holds for the outdegree. Indeed,
increasing~$t$ may increase the distance of two vertices near opposite
ends of
the boundary of $[-\frac{t}2, \frac{t}2]$, thus
decreasing the indegree of the younger vertex which, in turn, might
destroy further links, eventually reducing the outdegree of the vertex
at the origin.

\begin{pf*}{Proof of Lemma~\ref{polynomialgrowth}}
We fix $s_0>0$ and start with the case $t=\infty$.
The indegree process $(Z^\infty_{s_0}(s))_{s>s_0}$ is an
time-inhomogeneous\vspace*{1pt} pure birth process, starting from $Z^\infty
_{s_0}(s_0)=0$, and for which, at time $s$, the transition density from
state $k$ to state $k+1$ is
$f(k)/s$. Indeed, given $Z_{s_0}(s)=k$, we have $Z^\infty
_{s_0}(s+\mathrm{d}s)\ge k+1$ if and only if the set
\[
\biggl\{(y,u)\in\Y\dvtx  u \in(s, s+\mathrm{d}s], \V\bigl
((0,1),(y,u)\bigr) \le
\varphi\biggl( \frac{u d(y,0)} {f(k)} \biggr) \biggr\}
\]
is nonempty, which due to the normalization of~$\varphi$ happens with
probability $\frac{f(k)}s \,\mathrm{d}s + o(\mathrm{d}s)$.
We introduce a logarithmic change of time and write
\[
\widetilde Z(u): = Z^\infty_{s_0}\bigl(s_0
e^u\bigr).
\]
Then\vspace*{1pt} the
process $\widetilde Z$ is a time-homogeneous pure birth process, with jumping
intensity from state
$k$ to state $k+1$ equal to $f(k)$. Write $T_k:= \inf\{u \dvtx
\widetilde
Z(u)\ge k\}$ for the first time when
this process hits state~$k$, which is finite as $f$ is nondecreasing.
Then $(T_{i+1}- T_i)_{i\ge0}$ are independent, and $T_{i+1}- T_i$ is
exponential with parameter $f(i)$. The process
\[
M_k:= T_k - \sum_{i=0}^{k-1}
\frac{1} {f(i)}
\]
is a martingale, which is bounded in $L^2$ and thus convergent. Hence,
we have
$T_k \sim\frac{1} \gamma\ln k$, and further
\[
\ln\widetilde Z(u) \sim\gamma u\quad\mbox{and}\quad \ln Z^\infty_{s_0}(s)
\sim\gamma\ln s.
\]
For the case of a finite $t$, we first get, from the monotonicity
property, the upper bound
\[
\limsup_{s\to\infty} \frac{\ln Z^t_{s_0}(s)} {\ln s} \le\gamma.
\]
In particular, a.s., we have $Z^t_{s_0}(s)\le s^{(1+\gamma)/ 2}$ for
$s$ large enough.
But the process $(Z^t_{s_0}(s))_{s>s_0}$ is a time-inhomogeneous pure
birth process with transition density from state $k$ to state $k+1$
\[
2f(k) \int_0^{({st})/({f(k)})} \varphi(y) \,\mathrm{d}y,
\]
which is equivalent to $f(k)$ when $t\uparrow\infty$, uniformly for all
$s$ and $k\le s^{(1+\gamma)/2}$.
The same arguments as in the case $t=\infty$
then yield the lower bound, showing that we still have $\ln
Z^t_{s_0}(s) \sim\gamma\ln s$.
\end{pf*}

\begin{pf*}{Proof of Lemma~\ref{exponentialbound}}
By the monotonicity argument we can suppose $t=\infty$ and, as before,
we study the chain $\widetilde Z$ and its hitting times $T_k$. The parameter
of the exponential variable $T_{i+1}- T_i$ is $f(i)$, which is less
than or equal to $(p+i+1) \gamma$. It follows that $T_k$
(!!CHANGE!!) dominates stochastically
a sum of independent exponential random variables with parameters
$(p+1)\gamma$, $(p+2) \gamma, \ldots, (p+k) \gamma$, respectively.

Let $(\tilde\tau_i)_{1\le i \le k+p}$ be a family of i.i.d. random
variables, each following an exponential law with the same parameter
$\gamma$. Let $(\tilde\tau_{i_1}, \tilde\tau_{i_2}, \ldots, \tilde\tau
_{i_{k+p}})$ denote their decreasing rearrangement, and $\tilde\tau
_{i_{k+p+1}}=0$. For $1\le j\le k+p$, let $\tau_j= \tilde\tau
_{i_j}- \tilde
\tau_{i_{j+1}}$. Then the family $(\tau_j)_{1\le j \le k+p}$ is
independent, and $\tau_j$ is an exponential variable with parameter $j
\gamma$.
Observe also that
\[
\tau_{p+1}+\cdots+\tau_{p+k}=\tilde\tau_{i_{p+1}}.
\]
Hence,
\[
\P\bigl\{Z^\infty_{s_0}\bigl( s_0
e^u\bigr)\ge k \bigr\} \le\P\{ \tilde\tau_{i_{p+1}} \le u
\}.
\]
Now write
\[
\{\tilde\tau_{i_{p+1}} \le u \} = \Biggl\{\sum
_{j=1}^{k+p} \un\{\tilde\tau_j >u\}
\le p \Biggr\}.
\]
The sum of indicators follows a binomial law of parameters $k+p$ and
$\exp(-\gamma u)$. Recall the concentration inequality for
binomial random\vadjust{\goodbreak} variables~$X$,
\[
\P\bigl\{X\le\mathbb E[X] - \lambda\bigr\} \le\exp\biggl(-\frac
{\lambda^2} {2
\mathbb E[X]}
\biggr).
\]
We apply this with $\lambda= \frac{1} 2 (k+p) \exp(-\gamma u)$ and get
\begin{eqnarray*}
\P\Biggl\{\sum_{j=1}^{k+p} \un
\{\tilde\tau_j >u\} \le p \Biggr\} & \le&\exp\biggl(-
\frac{k} 8 e^{-\gamma u} \biggr) \un\bigl\{2p\le k e^{-\gamma
u}
\bigr\} + \un\bigl\{2p> k e^{-\gamma u}\bigr\}
\\
& \le&\exp(p/4)\exp\biggl(- \frac{k} 8 e^{-\gamma u} \biggr).
\end{eqnarray*}
Finally, gathering the results, and taking $u=\ln s - \ln s_0$ gives,
for any $k\ge0$,
\[
\P\bigl\{Z^{\infty}_{s_0}( s) \ge k \bigr\} \le\exp(p/4)\exp
\biggl(- \frac{k} 8 \biggl(\frac{s_0} s \biggr)^\gamma
\biggr),
\]
as required.
\end{pf*}

\begin{pf*}{Proof of Corollary~\ref{linkingbound}}
The event $(0,1) \underset{t}{\leftrightarrow} \mathbf{x}$
coincides with the event that the indegree of
vertex $\mathbf{x}$ at time one
is large enough to ensure that the linking condition is satisfied. This
indegree has the same law as $Z^t_s(1)$ and is
independent of $\V((0,1),\mathbf{x})$. We thus get
\begin{eqnarray*}
&& \P\bigl\{(0,1) \underset{t} {\leftrightarrow} \mathbf{x} | (0,1) \in\Y,
\mathbf{x}\in\Y, \V\bigl((0,1),\mathbf{x}\bigr)=u \bigr\}
\\
&&\qquad  \le \P
\biggl\{\varphi
\biggl(\frac{|x|} {f(Z^t_s(1))} \biggr) \ge u \biggr\}
\\
&&\qquad \le\P\biggl\{ Z^t_s(1) \ge f^{-1} \biggl(
\frac{|x|} {\varphi
^{-1}(u)} \biggr) \biggr\}
\\
&&\qquad  \le e^{p/4} \exp\biggl( -
\frac{s^\gamma} 8 \biggl(\frac{|x|} {
\gamma\varphi^{-1}(u)} - \frac\beta\gamma\biggr) \biggr)
\\
&&\qquad \le e^{(p/4) + (\beta/(8 \gamma))} \exp\biggl( -\frac{|x|
s^\gamma} {8 \gamma\varphi^{-1}(u)} \biggr),
\end{eqnarray*}
yielding~(\ref{linkingboundequation}) with the explicit constant $c=
e^{(p/4) + (\beta/(8 \gamma))} $.
\end{pf*}

\section{Specific proofs of the main results}\label{sec5}

All the proofs of this section rely on the application of Theorem~\ref
{theoLLN} to appropriate functionals.
The functionals we use are only defined and used within each
subsection. That is, the same notation in different subsections
indicates different functionals.

\subsection{Empirical indegree distribution}\label{sec5.1}\label{subsectionindegree}
The following lemma provides the expected indegree of a vertex in the
infinite graph with age uniform on $(0,1]$.\vadjust{\goodbreak}

%
%
\begin{lemma}\label{limitingindegreemeasure}
Let $U$ be uniformly distributed in $(0,1]$ and independent of the
point process $\Y$. Then, for any $k\ge0$, we have
\[
\P\bigl\{Z^\infty_U( 1)=k\bigr\}= \mu(k),
\]
where $\mu$ is the probability measure defined by
%
%
\begin{equation}
\label{degreedistribution} \mu(k)= \frac{1} {1+f(k)} \prod_{l=0}^{k-1}
\frac{f(l)} {1+f(l)}.
\end{equation}
\end{lemma}

\begin{pf}
Recall that the process $(Z^\infty_{s_0}( s_0 e^u))_{0\le u\le\ln(1/s_0)}$
is a time-homoge\-neous pure birth process with transition intensity from
state $k$ to state $k+1$ equal to $f(k)$.
Consider also the Markov chain $(\widehat Z_u)_{0\le u\le\ln(1/s_0)}$
with values in $[s_0,1]\times\N\cup\{0\}$ started in $\widehat Z_0=(s_0,0)$, such that
at time $u$ the jumping intensity from state $(s,k)$ to state $(s,k+1)$
equals $f(k)$, and from state $(s,k)$ to state $(s_0 e^u,0)$ equals one.

The following facts are easy to check:
\begin{longlist}[(3)]
\item[(1)] The\vspace*{2pt} first coordinate $\widehat Z_{\ln(1/s_0)}^1$ of the chain $(\widehat Z_u)_{0\le u\le\ln(1/s_0)}$ at time $\ln(1/s_0)$
is equal to $s_0$ with probability $s_0$ and otherwise uniformly
distributed on the interval $[s_0,1]$.
\item[(2)] Conditionally\vspace*{1pt} on $\widehat Z_{\ln(1/s_0)}^1=s_1$, the second
coordinate $\widehat Z_{\ln(1/s_0)}^2$ has the same law as the random
variable~$Z^\infty_{s_1}(1)$.
\item[(3)] The second coordinate $(\widehat Z^2_u)_{0\le u\le\ln(1/s_0)}$ is a
time-homogeneous Markov chain, jumping from $k$ to $k+1$ with intensity
$f(k)$, and from $k$ to zero with intensity one.
\end{longlist}
The Markov chain stated in the third point was already encountered
in~\cite{DMdegrees}. It is recurrent and its law converges to its
invariant measure, which is\vspace*{1pt} precisely $\mu$.
From the first two points, we deduce that the law of $\widehat Z_{\ln
1/s_0}^2$ conditional on
$\widehat Z_{\ln1/s_0}^1\neq s_0$ is the same as the law of $Z^\infty
_U(t)$, where $U$ is uniform on $[s_0, 1]$.
Now, letting $s_0$ go to zero gives the result.
\end{pf}

\begin{pf*}{Proof of Theorem~\ref{theoconvergenceindegree}}
Let $g$ be a nondecreasing functional satisfying $\langle\mu,
g^p\rangle<\infty$ for some $p>1$.
We will apply Theorem~\ref{theoLLN} with the functionals $\xi
_t(\mathbf{x},
\setZ, \W):= g (Z_\mathbf{x}(1, G^t(\setZ\cup\{\mathbf{x}\},
\W)) )$, $t\in
(0,\infty]$, so that for $\mathbf{x}\in\Y^t$, we have $\xi
_t(\mathbf{x}, \Y, \V
)=g(Z^t_\mathbf{x}(1))$.

First, observe that the expectation of $\xi_\infty(\Y, \V)$ is
$\langle
\mu,g\rangle$.
Second, observe the following two simple consequences of the
monotonicity property. The process $(Z^t_U(1))_{t>0}$ is nondecreasing
and converges
almost surely to $Z^\infty_U(1)$, which is finite almost surely.
Moreover, the following uniform moment condition is satisfied:
\[
\sup_{t>0} \mathbb E\bigl[\xi_t\bigl((0,U), \Y, \V
\bigr)^p\bigr] \le\mathbb E\bigl[\xi_\infty\bigl((0,U), \Y,
\V\bigr)^p\bigr] = \bigl\langle\mu, g^p\bigr\rangle<
\infty.
\]
Hence, Theorem~\ref{theoLLN} ensures the convergence
\[
\frac{1} t \sum_{\mathbf{x}\in\Y^t} g
\bigl(Z_\mathbf{x}^t(1) \bigr) \longrightarrow\langle\mu,g
\rangle,
\]
in $L^1$ and thus in probability. Combining this with the well-known convergence
${|\Y^t|}/t \to1$ gives the convergence in probability
\[
\frac{1} {|\Y^t|} \sum_{\mathbf{x}\in\Y^t} g
\bigl(Z_\mathbf{x}^t(1) \bigr) \longrightarrow\langle\mu,g
\rangle,
\]
and thus proves Theorem~\ref{theoconvergenceindegree}.
\end{pf*}

We close this subsection with a lemma which implies Proposition~\ref{propositionlocalconvergence}(i).

%
%
\begin{lemma}\label{lemmalocalconvergenceindegree}
Almost surely, for any $\mathbf{x}=(x,s)\in\Y$, the incoming edges
of $\mathbf{x}$ in
$G^t(\Y, \V)$ and in $G^\infty(\Y, \V)$ are finite in number and
coincide for large~$t$.
\end{lemma}

\begin{rem}
The monotonicity property implies that the indegree of a vertex
$\mathbf{x}
$ in $G^t(\Y, \V)$ converges almost surely to that in $G^\infty(\Y,
\V
)$ if the position of the vertex is zero, or in probability if its
position is nonzero. The lemma guarantees that there is actually always
almost sure convergence.
\end{rem}

\begin{pf}
We work conditionally on $\mathbf{x}=(x,s)\in\Y$, and start by
showing that
there exists an almost surely finite random variable~$M$ such that,
for all $t \in(0,\infty]$ and $\mathbf{y}\in\Y$ younger than
$\mathbf{x}$ and at
distance at least $M$ of $\mathbf{x}$, the vertices $\mathbf{x}$ and
$\mathbf{y}$ are not
linked in $G^t(\Y, \V)$.

The strategy is to find a coupling with a model independent of $t$,
based on the observation that the distance between $\mathbf{x}$ and
$\mathbf{y}$ in $\T
_t$ can be shortened by at most $2|x|$ compared to that in $\R$. Let
$K$ be the number of vertices in $\Y$ located at distance at most
$2|x|$ of $\mathbf{x}$, which is an almost surely finite random variable.
Consider the model where:
\begin{itemize}
\item the vertices within distance $2|x|$ of $\mathbf{x}$ are deleted;
\item the other vertices all come closer to $\mathbf{x}$ by distance $2|x|$;
\item the attachment rule $f$ is replaced by the rule $f_K\dvtx  i
\mapsto f(i+K)$.
\end{itemize}
%
%
It should be clear that the vertices $\mathbf{y}\in\Y$ younger than
$\mathbf{x}$, at
distance at least $2|x|$ of~$\mathbf{x}$,
which are linked to $\mathbf{x}$ in some finite graph $G^t(\Y, \V)$,
are also
linked to $\mathbf{x}$ in this model. Furthermore,
the indegree of $\mathbf{x}$ is still finite almost surely. Hence it suffices
to choose $M$ as the distance of $\mathbf{x}$ to the
furthest younger vertex it is linked to in this model, plus $2|x|$.

Finally, all that is left to show is that the incoming edges of
$\mathbf{x}$
linking it to a younger vertex $\mathbf{y}$ within distance $M$ coincide
in $G^t(\Y, \V)$ and in $G^\infty(\Y, \V)$, for large $t$. This follows
from the following two simple observations. First, the vertex $\mathbf
{x}$ is
linked to no
other younger vertex beyond distance $M$---in $G^\infty(\Y, \V)$ or in
any $G^t(\Y, \V)$---which could influence its indegree.
Second, for $t\ge|x|+M$, the vertices in $\Y$ and in $\Y^t$ within
distance $M$ of $\mathbf{x}$ coincide. Hence, for $t\ge|x|+M$,
the vertex $\mathbf{x}$ has the same incoming edges in $G^t(\Y, \V)$
and in
$G^\infty(\Y, \V)$.
\end{pf}

\subsection{Empirical outdegree distribution}\label{sec5.2} \label{subsectionoutdegree}
The following proposition descri\-bes what we know about the expected
outdegree distribution in the infinite picture.

%
%
\begin{prop}\label{limitingoutdegree}
For any $u \in(0,1]$, the expected outdegree distribution, defined by
the weights 
%
%
\begin{equation}
\label{limitingoutdegreeformula} \nu(k): = \P\bigl\{Y^\infty_u=k\bigr\}, \qquad
k\in\N\cup\{0,\infty\},
\end{equation}
is independent of $u$. Moreover, the measure $\nu$ is a probability
measure on $\N\cup\{0\}$ [i.e., $\nu(\infty)=0$] and it is light tailed
in the sense that for any $\delta\in(0,1-\gamma)$, we have
\[
\nu\bigl([k,\infty)\bigr) = o\bigl(e^{-k^\delta}\bigr).
\]
\end{prop}

\begin{pf}
The fact that $\nu(k)$ does not depend on $u$ is a simple consequence
of the rescaling invariance property. Therefore we only consider $u=1$,
and we watch for the law of $Y_1^\infty$, the outdegree of the point
$(0,1)$ in the infinite picture.
%

Attach to each vertex $\mathbf{x}\in\Y$ the value $\V_\mathbf{x}:=
\V((0,1), \mathbf{x})$.
Then each vertex can be identified with a point of $\R\times(0,1]
\times(0,1)$, and
the set of vertices becomes a Poisson point process of intensity one on
$\R\times(0,1]
\times(0,1)$. The idea is to define a domain $E_k$ such that the
probability that there is \emph{any} vertex in $E_k$ linked to $(0,1)$
is $O(e^{-k^\delta})$, and the probability that there are in total
\emph{at least $k$} vertices in the complement of $E_k$ (not necessarily
linked to 0) is also $O(e^{-k^\delta})$. This goes as follows:
\begin{itemize}
\item
Fix $\delta\in(0, 1-\gamma)$. Choose first $\gamma'\in[\gamma,
1-\delta)$ such that inequality~(\ref{linkingboundequation}) is
satisfied for some constant $c\in(0,\infty)$ (this is always possible,
in the linear case even with $\gamma'=\gamma$, see Corollary~\ref{linkingbound} and Remark~\ref{genrem}). Then, choose $\delta_1$,
$\delta_2$ such that $\delta<\delta_2<\delta_1< 1-\gamma'$.
\item
Introduce
\[
\hspace*{-10pt} E_k= \biggl\{(x,s,u) \in\R\times(0,1] \times(0,1) \dvtx
\frac{x} {\varphi^{-1} (u)} \ge k^{{\delta}/ {\delta_2}}, s \ge\biggl( \frac
{x} {\varphi^{-1} (u)}
\biggr)^{- ({1-\delta_1})/{\gamma
'}} \biggr\}.
\]
\end{itemize}
Then, from Corollary~\ref{linkingbound}, for any $\mathbf{x}=(x,s)$
and $u$
such that $(x,s,u)\in E_k$, we have
\begin{eqnarray*}
&& \P\bigl\{ (0,1) \underset t \leftrightarrow\mathbf{x} | \mathbf{x}\in
\Y,
\V_{\mathbf{x}
} =u \bigr\}
\\
&&\qquad \le  c\exp\biggl( -\frac{|x| s^{\gamma'}} {8 \gamma'
\varphi
^{-1}(u)} \biggr) \le c
\exp\biggl( - \frac{1} {8 \gamma'} \biggl(\frac{|x|} { \varphi
^{-1}(u)} \biggr)^{\delta_1}
\biggr). 
\end{eqnarray*}
%
Therefore, we get
\begin{eqnarray*}
&& \mathbb E \bigl[ \bigl| \bigl\{(x,s, u) \in E_k \dvtx  (x,s) \in\Y,
\V_{\mathbf{x}}=u, (0,1) \underset{\infty}\leftrightarrow(x,s) \bigr\}
\bigr| \bigr]
\\
&&\qquad \le\int\!\!\!\int\!\!\!\int_{E_k} \mathrm{d}x \,\mathrm{d}s
\,\mathrm{d}u\, c\exp\biggl( - \frac{1} {8 \gamma'} \biggl(\frac{|x|} {
\varphi
^{-1}(u)}
\biggr)^{\delta_1} \biggr)
\\
&&\qquad \le\int\!\!\!\int_{ \{{|x|}/{( \varphi^{-1}(u))} \ge
k^{\delta/{\delta_2} } \}} \mathrm{d}x \,\mathrm{d}u\,c\exp
\biggl( - \frac{1} {
8 \gamma'} \biggl(\frac{|x|} { \varphi
^{-1}(u)} \biggr)^{\delta_1}
\biggr)
\\
&&\qquad \le2 \int_0^1 \varphi^{-1}(u)\,\mathrm{d}u \int_{ [k^{
\delta/
{\delta_2} }, \infty)} c\exp\biggl( - \frac{1} {8 \gamma'}
y^{\delta_1} \biggr)\,\mathrm{d}y, 
\end{eqnarray*}
with the change of variable $y=|x|/\varphi^{-1}(u)$. 
The first integral is equal to the integral of $\varphi$ on $[0,\infty
)$, that is, $1/2$. For the second integral, introduce an appropriate
constant $C_1$ and get
\begin{eqnarray*}
&& \int_{ [k^{\delta/{\delta_2} }, \infty)} c\exp\biggl( - \frac{1}
{8 \gamma'}
y^{\delta_1} \biggr)\,\mathrm{d}y
\\
&&\qquad \le\int_{ [k^{\delta/{\delta_2}
}, \infty)}
C_1 \frac{\delta_2} {8 \gamma'} y^{\delta_2-1} \exp\biggl(-
\frac{1} {8
\gamma'} y^{\delta_2}\biggr)\,\mathrm{d}y \le C_1 \exp
\bigl(-k^{\delta}\bigr).
\end{eqnarray*}
The right-hand side is a bound to the expected number of vertices in
$E_k$ linked to $(0,1)$,
and thus it is also a bound to the probability that there is any
vertex in $E_k$ linked to $(0,1)$.%

Now, with an easier calculation we get that the total Lebesgue measure
of the complement of $E_k$ is bounded by
\begin{eqnarray*}
&& \int\!\!\!\int\!\!\!\int_{\R\times(0,1] \times(0,1)}  \mathrm{d}x
\,\mathrm{d}u\,\mathrm{d}s \bigl( \un\bigl\{y\le k^{\delta/{\delta_2}}\bigr\} + \un
\bigl\{s \le
y^{-({1-\delta_1})/ {\gamma'}}\bigr\} \bigr)
\\
&&\qquad \le2 \int_0^1 \varphi^{-1}(u)\,
\mathrm{d}u \int_{(0,\infty)} \bigl(\un\bigl\{y\le k^{\delta/{\delta_2}}
\bigr\} + (1\wedge y)^{-(1-\delta_1)/
{\gamma
'}} \bigr) \,\mathrm{d}y
\end{eqnarray*}
and is therefore less than $k^{{\delta}/ {\delta_2}}$ plus a
constant $C_2$. As the total number of points of~$\Y$ in this domain is
a Poisson variable
of parameter less than $k^{{\delta}/ {\delta_2}}+C_2$, we have
\begin{eqnarray*}
&& \P\bigl\{ \bigl| \bigl\{(x,s, u) \notin E_k \dvtx (x,s) \in\Y, \V
_{\mathbf{x}
}=u, (0,1) \underset{\infty} \leftrightarrow(x,s) \bigr\} \bigr| \ge k
\bigr\}
\\
&&\qquad \le\frac{(k^{{\delta}/ {\delta_2}} + C_2)^k} {k!} \le\frac{1} {
\sqrt{2 \pi k}} \biggl(\frac{e} k
\bigl(k^{{\delta}/ {\delta
_2}}+C_2\bigr) \biggr)^k,
\end{eqnarray*}
by Stirling's formula. As $\delta<\delta_2$ the right-hand side is
decaying superexponentially fast and therefore, summing up the estimates,
the overall probability that the outdegree of $(0,1)$ is greater than
or equal to $k$ is bounded by a~constant multiple of $\exp(-k^{\delta
})$. Hence $\nu([k,\infty))= O(\exp(-k^{\delta}))$, as claimed.
\end{pf}

The same proof, with the sets $E_k$ and their complements replaced by
their restrictions to $x \in(-t/2, t/2]$ also yields
%
%
\begin{equation}
\label{outdegreedomination} \P\bigl\{Y^t_u \ge k\bigr\}
\le(C_1 + C_3) \exp\bigl(-k^{\delta}\bigr)
\end{equation}
with the \emph{same constants} $C_1$ and $C_3$ for any $u$ and $t$.
Hence, the variables $Y^t_u$ are stochastically dominated by a
light-tailed random variable (this variable may not be $Y^\infty_1$,
recall that $Y^t_u$ is not monotone in $t$).

Take $g$ a function satisfying $g(k)=O(\exp(k^{\delta}))$ for some
$\delta<1-\gamma$, and define 
\[
\xi_t (\mathbf{x},\setZ, \W):= g \bigl(Y_\mathbf{x}\bigl(
G^t\bigl(\setZ\cup\{\mathbf{x}\}, \W\bigr)\bigr) \bigr),
\]
%
for $t\in(0,\infty]$, so that $\xi_t (\mathbf{x},\Y, \V)=
g(Y^t_\mathbf{x})$.
Domination~(\ref{outdegreedomination})
provides the uniform moment condition (for any given $p>1$).
Theorem~\ref{theooutdegree} follows, provided we prove the convergence
in probability of
$\xi_t ((0,u),\Y, \V)$ to $\xi_{\infty}((0,u),\Y, \V)$, for any
$u\in
(0,1]$. The following lemma proves more, and also
completes the proof of Proposition~\ref{propositionlocalconvergence}.

%
%
\begin{lemma}\label{lemmalocalconvergenceoutdegree}
Almost surely, for any $\mathbf{x}=(x,s)\in\Y$, the outgoing edges
of $\mathbf{x}$ in
$G^t(\Y, \V)$ and in $G^\infty(\Y, \V)$ are
finite in number and coincide for large~$t$.
\end{lemma}

\begin{pf}
Again, we suppose without loss of generality $s=1$ and work
conditionally on $\mathbf{x}=(x,1)\in\Y$.
Observe that if $M$ is any finite number then, almost surely, all the indegrees
of vertices in the graph $G^t(\Y, \V)$ with spatial position in $[x-M,
x+M]$ go to the
corresponding indegrees in $G^\infty(\Y, \V)$. Therefore, almost
surely, the outgoing edges linking $\mathbf{x}$ to a vertex within
distance $M$
of $\mathbf{x}$ coincide in $G^t(\Y, \V)$ and in $G^\infty(\Y, \V
)$, for large
$t$. The latter remains true if $M$ is random, but finite almost
surely. The lemma then follows if we show that there exists an almost
surely finite random variable $M$ such that for all $t \in(0,\infty]$,
for each $\mathbf{x}' \in\Y$ at distance at least $M$ of $\mathbf
{x}$, the vertices $\mathbf{x}
$ and $\mathbf{x}'$ are not linked in $G^t(\Y, \V)$.

To prove this, we use again the coupled model introduced in the proof
of Lemma~\ref{lemmalocalconvergenceindegree}. Again, the vertices
linked to $\mathbf{x}$ in some finite graph $G^t(\Y, \V)$ are also
linked to $\mathbf{x}
$ in the coupled model. Furthermore, in the coupled model, it is clear
that the \emph{outdegree} of $\mathbf{x}$ is still finite almost
surely, and we
can simply choose $M$ to be the distance of $\mathbf{x}$ to the
furthest vertex
it is linked to in this model, plus $2|x|$.
\end{pf}

\subsection{Clustering}\label{sec5.3}
\subsubsection{Average clustering coefficient}\label{sec5.3.1}
In this part, consider, for $t\in(0,\infty]$, the functionals $\xi_t$
and $\xi_t'$ defined by
\begin{eqnarray*}
\xi_t(\mathbf{x}, \setZ, \W)&=& c^{\mathrm{loc}}_\mathbf{x}
\bigl(G^t\bigl(\setZ\cup\{\mathbf{x}\}, \W\bigr)\bigr),
\\
\xi_t'(\mathbf{x}, \setZ, \W)&=& \un{ \bigl\{\mathbf{x}
\in V_2 \bigl(G^t\bigl(\setZ\cup\{\mathbf{x} \}, \W\bigr)
\bigr) \bigr\}},
\end{eqnarray*}
with the convention $ \xi_t(\mathbf{x}, \setZ, \W)=0$ if $\mathbf
{x}\notin V_2
(G^t(\setZ\cup\{\mathbf{x}\}, \W) )$, that is, if $\mathbf
{x}$ has degree less
than two. Thanks to Proposition~\ref{propositionlocalconvergence} and
its corollary, we know that for any $\mathbf{x}$, there is almost sure
convergence of
$\xi_t(\mathbf{x}, \Y, \V)$ to $\xi_\infty(\mathbf{x}, \Y, \V
)$, and of $\xi_t'(\mathbf{x}, \Y,
\V)$ to $\xi_\infty'(\mathbf{x}, \Y, \V)$. In particular,
condition \textup{(A)} of
Theorem~\ref{theoLLN} is satisfied for both functionals. Moreover, as
they take
values in $[0,1]$, the uniform moment condition~\textup{(B)} is also
satisfied. We immediately deduce the convergence in $L^1$ and in
probability of
\[
\frac{1} t \sum_{\mathbf{x}\in\Y^t} c^{{\mathrm{loc}}}_{\mathbf
{x}}
\bigl(G^t(\Y, \V)\bigr)\quad\mbox{and}\quad\frac{\llvert V_2 \rrvert
} t
\]
to the constants $ \mathbb E[\xi_\infty((0,U), \Y, \V)]$ and $\P\{
(0,U) \in
V_2 (G^t(\Y\cup\{(0,U)\}, \V) )\}$, respectively. Hence the
average clustering coefficient converges in probability to
\[
c^{\mathrm{av}}_\infty:= \mathbb E \bigl[\xi_\infty\bigl((0,U),
\Y, \V\bigr) | (0,U) \in V_2 \bigl(G^t\bigl(\Y\cup\bigl
\{(0,U)\bigr\}, \V\bigr) \bigr) \bigr].
\]
This constant is the expected local clustering coefficient of the
infinite graph at vertex $(0,U)$, conditionally on the event that its
degree is at least two. It is hard to compute analytically, but it
clearly belongs to $(0,1)$. The first part of Theorem~\ref
{theoclustering} is proved.

\subsubsection{Global clustering coefficient}\label{sec5.3.2}
The estimation of the global clustering coefficient relies on separate
estimations of the number of triangles and of the number of open
triangles in the network. 
We choose to count the triangles \emph{from their youngest vertex}, and
define the functional
$ \xi_t(\mathbf{x}, \setZ, \W) $
to be the number of triangles in $G^t(\setZ\cup\{\mathbf{x}\}, \W)$
having $\mathbf{x}
$ as youngest vertex. Again, Proposition~\ref
{propositionlocalconvergence} ensures that condition~\textup{(A)} is
satisfied. The simple observation that
$\xi_t(\mathbf{x}, \Y, \V)$ is bounded from above by $Y^t_\mathbf
{x}(Y^t_\mathbf{x}-1)/2$,
together with inequality~(\ref{outdegreedomination}),
ensures that the uniform moment condition~\textup{(B)} is satisfied for any
$p>1$, and we can apply Theorem~\ref{theoLLN}.
The number of triangles in the network $G^t(\Y, \V)$, divided by $t$,
converges to a positive and finite constant.
In other words, the number of triangles is asymptotically {proportional
to the number of vertices}.

Similarly, we introduce the functionals
\[
\xi_t'(\mathbf{x}, \Y, \V)=\frac{Z^t_\mathbf{x}(1) (Z^t_\mathbf
{x}(1)-1)} 2
\]
and
\[
\xi_t''(\mathbf{x}, \Y,
\V)=Y^t_\mathbf{x}Z^t_\mathbf{x}(1)+
\frac
{Y^t_\mathbf{x}(Y^t_\mathbf{x}-1)}2,
\]
%
where $\xi_t'$ corresponds to the open triangles whose tip $\mathbf
{x}$ is the
oldest vertex, and $\xi_t''$ are the remaining open triangles with tip
in~$\mathbf{x}$.
For both functionals, condition~\textup{(A)} follows again from Proposition~\ref
{propositionlocalconvergence}.
Condition~\textup{(B)} for functional $\xi_t''$ is also automatically satisfied,
for any $1<p<\frac{1}\gamma$. More precisely, to bound the expectation
of the product $(Y^t_U Z^t_U(1))^p$, first use their independence
conditionally on $U=u$, then use the domination~(\ref
{outdegreedomination}) to bound uniformly $\mathbb E[(Y_u^t)^p]$, before
integrating with respect to $u$. Therefore the number of open triangles
whose tip is not the oldest vertex, divided by $t$, converges in
probability to a positive and finite constant.

It is only for the functional $\xi'_t$ that we must discuss different
cases. Suppose first $\sum k^2 \mu(k)=\infty$, which implies $\mathbb
E [\xi
'_\infty( \Y, \V) ] = \infty$. In that case, Theorem~\ref{theoLLN}
and Remark~\ref{remLLN} imply that the number of open triangles with
tip the oldest vertex, divided by $t$, goes to $+\infty$ in
probability. Hence, the global clustering coefficient converges in
probability to zero.
Finally, suppose $\sum k^2 \mu(k)<\infty$ and hence $\mathbb E
[\xi'_\infty
( \Y, \V) ] < \infty$. The monotonicity property implies that the
variables $\xi'_t((0,U),\Y^t, \V^t)$ are always uniformly integrable,
even when condition~\textup{(B)} is not satisfied,\footnote{If $\gamma<\frac{1}2$,
then \textup{(B)} holds for any $1<p<\frac{1}{2\gamma}$, but if $\gamma=\frac{1}2$
and $\sum k^2 \mu(k)<\infty$, then \textup{(B)} does not hold.} and allows to
conclude that the global clustering coefficient converges in
probability to a positive constant.

\subsection{Empirical edge length distribution}\label{sec5.4}
The law of the distribution $\lambda_t$, the \emph{rescaled} empirical
edge length distribution in the original graph $G_t$, is the same as
the law of the \emph{unrescaled} empirical edge length distribution in
the graph $G^t(\Y, \V)$, which we will denote by $\tilde\lambda
_t$. We have,
abbreviating $E^t:=E(G^t(\Y^t, \V^t))$ and assuming it is not empty,
\[
\tilde\lambda_t = \frac{1} {|E^t|} \sum
_{(\mathbf{x}', \mathbf
{x}) \in E^t} \delta_{d(\mathbf{x}',
\mathbf{x})} = \biggl(\sum
_{\mathbf{x}\in\Y^t} Y^t_\mathbf{x}
\biggr)^{-1} \sum_{\mathbf{x}\in\Y^t} \mathop{\sum
_{\mathbf{x}' \in\Y^t,
\mathbf{x}' \mathop{\leftrightarrow}\limits_{t}\mathbf{x}}}_
{\mathbf{x}'\ \mathrm{older\ than}\ \mathbf{x}} \delta_{d(\mathbf{x}',
\mathbf{x})},
\]
where we have chosen to count each edge from its younger vertex. Define
the probability measure $\lambda$ on $[0,+\infty)$ by
\[
\lambda(A) = \frac{1} {\mathbb E[Y^\infty_{(0,U)}]} \mathbb E \bigl[ \bigl|
\bigl\{ (x,s) \in\Y
\dvtx (x,s) \underset\infty\leftrightarrow(0,U), s<U, |x| \in A \bigr
\} \bigr| \bigr],
\]
for any Borel set $A\subset[0,\infty)$, where $U$ denotes a random
variable uniformly distributed on $(0,1)$ and independent of $\Y$ and
$\V$.
By application of Theorem~\ref{theoLLN} we get, for any $x\in
[0,\infty)$,
\[
\tilde\lambda_t\bigl([x,\infty)\bigr) \longrightarrow\lambda
\bigl([x,\infty)\bigr),
\]
in\vspace*{1pt} probability. A technical but simple argument shows convergence in
probability of $\tilde\lambda_t$ to $\lambda$ in the space of probability
measures on $[0,+\infty)$,
equipped with the L\'evy--Prokhorov metric, which defines narrow convergence.
This proves the first part of Theorem~\ref{theoedgelength}.

Next we estimate the order of $\lambda([K,\infty))$ when $K$ is large.
Fix $K>0$. We have
\[
\lambda\bigl([K,\infty)\bigr) = 2 \int_\Omega\,\mathrm{d}x
\otimes\mathrm{d}t \otimes\mathrm{d}u \otimes\mathrm{d}s\,\P\bigl\{(x,s)
\underset\infty\leftrightarrow(0,t) | \V\bigl((x,s), (0,t)\bigr) =u
\bigr\},
\]
where $\Omega$ is the domain $ \{(x, t, u, s) \in[K,\infty)
\times
(0,1)^3 \dvtx  s<t \}$. The factor two comes from the fact that
we have chosen $x>0$. The linking probability contains an implicit
conditioning on the event that $(x,s)$ and $(0,t)$ are in $\Y$.
As in the proof of Corollary~\ref{linkingbound} we can rewrite
\begin{eqnarray*}
\P\bigl\{(x,s) \underset\infty\leftrightarrow(0,t) | \V\bigl((x,s), (0,t)
\bigr) =u \bigr\} &=& \P\bigl\{Z_s^\infty(t) \ge
f^{-1} \bigl( t x / \varphi^{-1}(u) \bigr) \bigr\}
\\
&=& \P\bigl\{Z_{s/t}^\infty(1) \ge f^{-1} \bigl( t
x / \varphi^{-1}(u) \bigr) \bigr\},
\end{eqnarray*}
where $f^{-1}$ is the right-continuous inverse of $f$. Changing the variable
\[
(x,t,u,s) \mapsto(y,z,u, r)\qquad\mbox{with } y=\frac{t x} {\varphi
^{-1}(u)}, z=
\frac{K \varphi^{-1}(u)} x, r=\frac{s} t,
\]
sending $\Omega$ to $\Omega'= \{(y,z,u,r) \in(0, \infty)^3 \times
(0,1), z \le\frac{K} y, u\le\varphi(z)\}$ we get
\begin{eqnarray*}
\lambda\bigl([K,\infty)\bigr) &=& 2 K^{-1} \int_{(0,\infty)}
\mathrm{d}y\, y \biggl(\int_{(0,{K}/ y)} \mathrm{d}z \int
_{(0, \varphi(z))} \mathrm{d}u\,\varphi^{-1}(u) \biggr)
\\
&&{}\times  \biggl(
\int_{(0,1)} \mathrm{d}r\, \P\bigl\{Z_r^\infty(1)
\ge f^{-1}(y) \bigr\} \biggr)
\\
&=& 2 K^{-1} \int_0^\infty\mathrm{d}y\, y
I \biggl(\frac{K} y \biggr) J(y),
\end{eqnarray*}
with $I$ and $J$ defined to be the two integrals in brackets in the
first line.
For an estimate of~$J$, we simply note that
$J(y)= \mu(\lceil f^{-1}(y)\rceil, \infty) \asymp1\wedge
y^{-1/\gamma}$.
For an estimate of~$I$ we start with the equality
\[
\int_{(0, \varphi(z))} \mathrm{d}u\,\varphi^{-1}(u)= \int
_{(0,\infty)} \varphi(z \vee v) \,\mathrm{d}v,
\]
based on the observation that they both represent the area of
\[
\bigl\{(u,v) \in(0,\infty)^2 \dvtx  u \le\varphi(z), v\le\varphi(z)
\bigr\},
\]
to get
\begin{eqnarray*}
I(a) &=& \int_{(0,a)\times(0,\infty)} \mathrm{d}z \otimes\mathrm{d}v\,
\varphi(z
\vee v)
= 2 \int_{(0,a)} v \varphi(v) \,\mathrm{d}v +a \int
_{(a,\infty)} \varphi(v) \,\mathrm{d}v.
\end{eqnarray*}

\noindent Now, elementary calculations yield
\[
I(a) \asymp\cases{ a \wedge1, &\quad if $\displaystyle\int_0^\infty
v \varphi(v) \,\mathrm{d}v<\infty$,
\cr
a \wedge a^{2-\delta}, &\quad if $
\displaystyle\varphi(v) \asymp1 \wedge v^{-\delta}$ for $\delta\in(1,2]$.}
\]
Finally, another elementary calculation shows that we have
\[
\lambda\bigl([K,\infty)\bigr) \asymp1 \wedge\bigl(K^{-1}+
K^{1-({1}/ \gamma)} + K^{1-\delta}\bigr),
\]
and Theorem~\ref{theoedgelength} follows.

\section{Variants of the model}\label{sec6}

\subsection{Discrete versus continuous time}\label{sec6.1}
We have decided to define our model in continuous time, as this is
naturally aligned with our techniques of proof. We expect that
all our results hold without change for the analogous discrete model,
but we have not attempted to derive this from our results as
we do not expect to get interesting insights from this. We point out
that the weak law of large numbers in~\cite{YukichPenrose}
includes a de-Poissonisation, but this cannot be applied directly in
our case as it does not deal with the explicit time dependence of
the attachment probabilities.%

\subsection{The case \texorpdfstring{$\gamma\ge1$}{$gamma>=1$}}\label{sec6.2}
This assumption leads to a very different behavior, which we briefly
discuss. Lemma~\ref{polynomialgrowth} does not hold anymore. Instead, the indegree of a
fixed vertex (the oldest one, e.g.), grows roughly linearly, and it
will be eventually connected to a positive proportion of the younger
vertices. The length of its incoming edges is thus of order one.
The law of large numbers, Theorem~\ref{theoLLN}, holds unchanged, as well as
Theorem~\ref{theoconvergenceindegree}. That said, we have $\sum k \mu(k)=\infty$, which implies
that the total number of edges is superlinear. The empirical outdegree
distribution converges
vaguely to the null distribution, as all the mass escapes to infinity.
In the infinite picture, the outdegree of each vertex is almost surely infinite.
Finally, the same phenomenon happens to the empirical edge length
distribution, if we still rescale it by the same factor of $t$. Note
that~\cite{Aiello}
also contains results for the case $\gamma=1$, corresponding to
$pA_1=1$ in their notation, which are consistent with our observations.%

\subsection{Higher-dimensional space}\label{sec6.3}
We have chosen to present our results for spatial distributions given
as uniform distributions on the one-dimensional torus to keep
technicalities to a minimum. Nothing would change if we replace the
torus by the unit interval, as boundary effects\vspace*{1pt} will be negligible.
There is also no problem generalizing results to higher-dimensional
tori~$\T^d$, or unit cubes. In fact, if we connect
the vertex~$\mathbf{y}=(y,t)$ to an older vertex~$\mathbf{x}$ with probability
\[
\varphi\biggl(\frac{t^{1/d}d(\mathbf{x},\mathbf{y})}{f(Z_\mathbf
{x}(t-))^{1/d}} \biggr),
\]
and normalize the profile function so that
\[
\int_{\R^d} \varphi\bigl( d(0,y) \bigr) \,\mathrm{d}y=1,
\]
we can recover Theorems~\ref{theoconvergenceindegree}, \ref{theooutdegree} and~\ref{theoclustering} verbatim by the same arguments. In the
empirical edge length distribution we need to
rescale by a factor of $t^{{1}/ d}$ instead of $t$, and we obtain a
limiting edge length distribution~$\lambda$, which
depends on the dimension. If the profile function scales like $\varphi
(x) \asymp1 \wedge x^{-\delta}$
we need to have $\delta>d$ to meet the integrability condition.
Then we recover Theorem~\ref{theoedgelength} with $\eta\in(0,d]$ the smallest of the
three constants $d$, $\delta- d$ and $d(\frac{1}\gamma-1)$.
If $\eta>1$, then $\lambda$ has a first moment, and the mean edge
length is of order $t^{-{1}/d}$.%

\subsection{More general underlying spaces}\label{sec6.4}
It is no problem to define our model in a general metric space. However
this can lead to a significant
change in the behavior, as inhomogeneities in the underlying space
introduce an element of fitness of
individual vertices. In a similar spirit one can change the spatial
distribution of \mbox{incoming} vertices.
Again one would expect that small changes do not change the qualitative
behavior, whereas highly
fluctuating densities can have a major effect. These problems have
recently been discussed by
Jordan~\cite{Jordan-new} for a closely related model.

\subsection{Further remarks and problems}\label{sec6.5}
Our technique allows the analysis of a wide range of functionals of
spatial preferential attachment
networks, and we have only picked those that appeared most interesting
to us at this point. Other
network ``metrics'' that could be studied are the total edge length,\vadjust{\goodbreak}
the number of occurrences
of a particular finite subgraph (or motif), or the number of (suitably
defined) high density spots.%

More generally, the local limit results established here offer a handle
to the study of global connectivity
problems, for example, the existence and diameter of a giant component.
This would be of particular
interest as nontrivial rigorous results on the existence of the giant component
have never been established for dynamic network models that are not
locally tree-like.
Existence of a giant component for an interesting static example, which
is not locally tree-like, is
studied in~\cite{Bollobas}. A~first discussion including a
simulation-based conjecture for the location of a phase transition related
to the existence of a giant component in the model of~\cite{Aiello} can
be found in~\cite{Cooper}.

\section*{Acknowledgments}
We would like to thank two anonymous referees for their careful reading of the
manuscript and for suggesting several improvements.


%
%

\printaddresses

\end{document}